\def\text#1{\mbox{#1}}
\newcommand{\x}[1]{}
\numberwithin{lemma}{section}
\numberwithin{theorem}{section}
\numberwithin{remark}{section}
\numberwithin{proposition}{section}
\numberwithin{example}{section}
\numberwithin{corollary}{section}
\numberwithin{definition}{section}
\numberwithin{example}{section}
\def\beq{\begin{equation}}
\def\eeq{\end{equation}}
\def\baq{\begin{eqnarray}}
\def\eaq{\end{eqnarray}}
\def\baqn{\begin{eqnarray*}}
\def\eaqn{\end{eqnarray*}}
\def\Limsup{\mathop{{\rm Lim}\,{\rm sup}}}
\begin{document}

\title{ Faces and Support Functions for the Values of Maximal Monotone Operators
}

\titlerunning{Faces and Support Functions for the Values of Maximal Monotone Operators}        

\author{Bao Tran Nguyen \and
        Pham  Duy Khanh   \\
       }
\authorrunning{Bao Tran Nguyen \and Pham  Duy Khanh } 
\institute{Bao Tran Nguyen  \at
Universidad de O'Higgins, Rancagua, Chile
              \\
              Quy Nhon University, Quy Nhon, Vietnam \\
              \email{nguyenbaotran31@gmail.com, baotran.nguyen@uoh.cl}           
           \and
           Pham  Duy Khanh (corresponding author) \at
                          Department of Mathematics, HCMC University of Education, Ho Chi Minh, Vietnam \\
		Center for Mathematical Modeling, Universidad de Chile,  Santiago, Chile\\
\email{pdkhanh182@gmail.com; pdkhanh@dim.uchile.cl} 
}

\date{Received: date / Accepted: date}

\maketitle

\begin{abstract}
 Representation formulas for faces and support functions of the values of maximal monotone operators are established in two cases: either the operators are defined on uniformly Banach spaces with uniformly convex duals,  or their domains have nonempty interiors on reflexive real Banach spaces. 
		Faces and support functions are characterized by the limit values of the minimal-norm selections of maximal monotone operators in the first case while in the second case they are represented by the limit values of any selection of maximal monotone operators. These obtained formulas are applied to study the structure of maximal monotone operators: the local unique determination from their minimal-norm selections, the local and global decompositions, and the unique determination on dense subsets of their domains.
 \keywords{Maximal monotone operators \and Face \and Support function \and Minimal-norm selection \and Yosida approximation \and Strong convergence \and Weak convergence}
 \subclass{26B25 \and 47B48 \and 47H04 \and 47H05 \and 54C60}
\end{abstract}
\section{Introduction}
	Faces and support functions are important tools in representation and analysis of closed convex sets (see \cite[Chapter V]{HL93}). For a closed convex set, a face is the set of points on the given set which  maximizes some (nonzero) linear form while the support function is the signed distance from the origin point to the supporting planes of that set. The face associated with a given direction can be defined via the value of the support function at this direction \cite[Definition~3.1.3, p. 220]{HB19}. 
	Recently, this notion has been defined and studied for the values of maximal monotone operators in \cite[Sect.~3]{HB19}. In this paper, the authors provided some characterizations for the boundary and faces of the values of maximal monotones operators in Hilbert spaces. 
	Their work is motivated by the applications of these characterizations to the stability issues of semi-infinite linear programming problems. 
	
	\medskip
	Motivated by the study of the structure of maximal monotone operators, our paper will investigate the faces and  support functions for the values of maximal monotone operators in reflexive real  Banach spaces. 
	We aim to establish some representation formulas for the faces and support functions in two cases regarding the uniform convexity of the given spaces and theirs duals, and the nonemptiness of the domains of maximal monotone operators. For the first case, we will extend the characterizations of faces associated with directions in \cite[Theorem~3.2]{HB19} from Hilbert spaces to uniformly convex ones with uniformly convex duals.  In comparison with previous work, where the authors used the properties of solutions of differential inclusions governed by maximal monotone operators,
	the proof here is new, simpler and more directed since we only use  some basic properties of the Yosida approximation of maximal monotone operators. We formulate in the context of uniformly convex spaces since our proof strongly depends on the single-valuedness the duality mapping and its inverse, and the strong convergence of the trajectories generated by Yosida approximation. The obtained characterizations and the graphical density of points of subdifferentiability of convex functions allow us to get the representation formulas for support functions in uniformly convex spaces with uniformly convex duals. For the second case, we will work with maximal monotone operators whose domains have nonempty interiors in reflexive Banach spaces. Under the assumption that the domains of operators have nonempty interiors and the local boundedness of maximal monotone operators we could refine the formulas obtained in the first case. We show that the faces and support functions can be represented by the limit values of any selection of maximal monotone operators.   
	
	\medskip
	Characterizations for faces and support functions allow us to investigate the structure of maximal monotone operators. On uniformly convex spaces with uniformly convex duals, we show the local unique determination of maximal monotone operators from their minimal-norm selections, and their local decompositions when their minimal-norm selections are locally bounded. 
	On reflexive Banach spaces, we get some global decompositions of maximal monotone operators when their domains have nonempty interiors. The global decompositions allow us to prove the unique determination of maximal monotone operators on dense subset of their domains.    
		
	\medskip
	The rest of this paper is structured as follows. In Sect. 2, we recall some basic notations of geometry of reflexive real Banach spaces and monotone operator theory. We also collect preliminary results in this section for the reader's convenience. In Sect. 3, representation formulas for faces and support functions are established in uniformly convex spaces with uniformly convex duals. Theses formulas help us to show the local unique determination and to get the local decomposition of a maximal monotone operator provided that its minimal-norm selection is locally bounded. In Sect. 4, we will work with maximal monotone operators whose domains have nonempty interiors in reflexive real Banach spaces. Under our assumptions, we could refine the formulas for faces and support functions obtained in Sect. 3. The refined formulas allow us  
	to find some global decompositions of maximal monotone operators and to show their unique determination on dense subsets of their domains.    
\section{Basic Definitions and Preliminaries}
	Let $X$ be a real reflexive Banach space with norm $\|\cdot\|$ and $X^*$ its continuous dual.  The value of a functional $x^*\in X^*$
	at $x\in X$ is denoted by $\langle x^*,x\rangle$. The open unit balls on $X$ and $X^*$ are denoted, respectively,  by 
	$\mathbb{B}$ and $\mathbb{B}^*$. For $x\in X$ and $r>0$, the open ball centered at $x$ with radius $r$ is denoted by $B(x;r)$. 
	We use the symbol $\lim$ or  $\rightarrow$ to indicate the strong convergence in $X$, and  $\rightharpoonup$ for weak convergence in $X$ and $X^*$.
	Denote on $X$ the set-valued mapping $J:X\rightrightarrows X^*$
	$$
	J(x):=\{x^*\in X^*: \langle x^*,x\rangle=\|x\|^2=\|x^*\|^2\}, \quad \forall x\in X.
	$$
	The mapping $J$ is called the \textit{duality mapping} of the space $X$. The inverse mapping $J^{-1}:X^*\rightrightarrows X$ defined by $J^{-1}(x^*):=\{x\in X: x^*\in J(x)\}$ also satisfies
	$$
	J^{-1}(x^*)=\{x\in X: \langle x^*,x\rangle=\|x\|^2=\|x^*\|^2\}.
	$$
	Since $X$ is reflexive, $X=X^{**}$ and so $J^{-1}$ is the duality mapping of $X^*$.
	The properties of $J$ are closely related to the geometry of the spaces $X$ and $X^*$. Recall that  a real Banach space  is \textit{uniformly convex} iff for every $0<\varepsilon<2$ there is some $\delta>0$ so that for any two vectors with  $\|x\|=1$ and $\|y\|=1$, the condition $\|x-y\|\geq\varepsilon$ implies that $\|x+y\|\leq 2(1-\delta)$. Clearly, uniformly convex Banach space is also \textit{strictly convex}, i.e., for any two distinct  vectors with  $\|x\|=1$ and $\|y\|=1$ we have $\|x+y\|<2$. 
	Moreover, the Milman--Pettis theorem states that every uniformly convex Banach space is reflexive, while the converse is not true. 
	\begin{proposition} \label{DualityMappping} {\rm (see \cite[Theorem~1.2]{Barbu10})}
		If the dual space $X^*$ is strictly convex,
		then the duality mapping $J:X\rightarrow X^*$  is single-valued and demicontinuous.   
		If the space $X^*$ is uniformly convex, then $J$ is uniformly continuous on every bounded subset of $X$.
	\end{proposition}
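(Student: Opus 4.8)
The plan is to treat the two assertions separately: first I would show that $J$ is nonempty-valued and single-valued, then deduce demicontinuity, and finally establish the stronger uniform continuity under the uniform-convexity hypothesis, which I expect to be the real work. Throughout I use that $X$ is reflexive, so $X^{*}$ is reflexive as well.

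First I would record that $J(x)$ is always nonempty: by Hahn--Banach there is a norming functional $f\in X^{*}$ with $\|f\|=1$ and $\langle f,x\rangle=\|x\|$, whence $\|x\|f\in J(x)$. For single-valuedness, suppose $x_1^{*},x_2^{*}\in J(x)$ with $x\neq0$. Both have norm $\|x\|$ and satisfy $\langle x_i^{*},x\rangle=\|x\|^{2}$, so $\langle\tfrac12(x_1^{*}+x_2^{*}),x\rangle=\|x\|^{2}$, which forces $\|\tfrac12(x_1^{*}+x_2^{*})\|\geq\|x\|$; together with the triangle inequality this yields $\|x_1^{*}+x_2^{*}\|=2\|x\|=\|x_1^{*}\|+\|x_2^{*}\|$. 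Dividing by $\|x\|$ and invoking strict convexity of $X^{*}$ (two unit vectors whose sum has norm $2$ must coincide) gives $x_1^{*}=x_2^{*}$.

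For demicontinuity, let $x_n\to x$. Since $\|J(x_n)\|=\|x_n\|\to\|x\|$, the sequence $(J(x_n))$ is bounded in the reflexive space $X^{*}$, so every subsequence has a further weakly convergent subsequence, say $J(x_{n_k})\rightharpoonup y^{*}$. Splitting $\langle J(x_{n_k}),x_{n_k}\rangle=\langle J(x_{n_k}),x\rangle+\langle J(x_{n_k}),x_{n_k}-x\rangle$ and passing to the limit gives $\langle y^{*},x\rangle=\|x\|^{2}$; combined with weak lower semicontinuity of the norm ($\|y^{*}\|\leq\|x\|$) and the reverse bound forced by $\|x\|^{2}=\langle y^{*},x\rangle\leq\|y^{*}\|\,\|x\|$, this shows $y^{*}\in J(x)$. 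By single-valuedness $y^{*}=J(x)$, and since every weak cluster point equals $J(x)$, the whole sequence converges weakly, i.e. $J$ is demicontinuous.

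The hard part is uniform continuity on bounded sets under uniform convexity of $X^{*}$, and here I would argue by contradiction using the $\varepsilon$--$\delta$ definition of uniform convexity directly. Suppose $\|J(x_n)-J(y_n)\|\geq\varepsilon$ for some $\varepsilon>0$ while $x_n,y_n\in r\ball$ and $\|x_n-y_n\|\to0$; write $u_n=J(x_n)$, $v_n=J(y_n)$, so $\|u_n\|=\|x_n\|$ and $\|v_n\|=\|y_n\|$. The monotonicity estimate $\langle u_n-v_n,x_n-y_n\rangle\geq(\|x_n\|-\|y_n\|)^{2}$, together with $|\langle u_n-v_n,x_n-y_n\rangle|\leq 2r\|x_n-y_n\|\to0$, forces $\|x_n\|-\|y_n\|\to0$; and one may assume $\|x_n\|\geq\delta>0$ along a subsequence, since otherwise $\|u_n\|,\|v_n\|\to0$ and $\|u_n-v_n\|\to0$ outright. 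From $\langle v_n,x_n\rangle=\|y_n\|^{2}+\langle v_n,x_n-y_n\rangle=\|y_n\|^{2}+o(1)$ we get $\langle u_n+v_n,x_n\rangle=\|x_n\|^{2}+\|y_n\|^{2}+o(1)$, and dividing by $\|x_n\|\geq\delta$ yields $\|u_n+v_n\|\geq\|u_n\|+\|v_n\|+o(1)$; with the triangle inequality this gives $\|u_n+v_n\|=\|u_n\|+\|v_n\|+o(1)$. Passing to the normalized vectors $\hat u_n=u_n/\|u_n\|$ and $\hat v_n=v_n/\|v_n\|$, the bound $\|u_n-v_n\|\geq\varepsilon$ yields $\|\hat u_n-\hat v_n\|\geq\varepsilon/(2r)$ for large $n$, so uniform convexity of $X^{*}$ produces a fixed $\delta'>0$ with $\|\hat u_n+\hat v_n\|\leq 2(1-\delta')$; reinstating the factors $\|u_n\|,\|v_n\|\in[\delta,r]$ and comparing with $\|u_n+v_n\|=\|u_n\|+\|v_n\|+o(1)$ produces an inequality of the form $2\delta\delta'\leq o(1)$, the desired contradiction. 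I expect the bookkeeping in this last step --- relating $\|u_n+v_n\|$ to $\|\hat u_n+\hat v_n\|$ while controlling the small discrepancy $\big|\|u_n\|-\|v_n\|\big|$ --- to be the only genuinely delicate point.
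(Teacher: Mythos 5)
The paper does not prove this proposition at all --- it is quoted verbatim from Barbu \cite[Theorem~1.2]{Barbu10} --- so there is no in-paper argument to compare against; your proof must stand on its own, and it does. It is correct and follows the standard textbook route: nonemptiness of $J(x)$ from Hahn--Banach, single-valuedness from the equality case of the triangle inequality together with strict convexity of $X^*$, demicontinuity from boundedness of $(J(x_n))$, weak sequential compactness in the reflexive dual, identification of every weak cluster point as an element of $J(x)$, and the subsequence principle. The uniform-continuity argument by contradiction is also sound, and the three delicate points are all handled: the reduction to $\|x_n\|\geq\delta>0$ (the complementary case giving $\|u_n-v_n\|\to 0$ directly), the identity $\|u_n+v_n\|=\|u_n\|+\|v_n\|+o(1)$ obtained by testing against $x_n/\|x_n\|$ and using $\|x_n\|-\|y_n\|\to 0$ from the monotonicity estimate, and the transfer of the $\varepsilon$-separation to the normalized vectors $\hat u_n,\hat v_n$ so that uniform convexity yields the fixed defect $2\delta\delta'$ contradicting the $o(1)$ identity. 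The only cosmetic omission is the trivial case $x=0$ in the demicontinuity step, where $\|y^*\|\leq\|x\|=0$ already forces $y^*=0=J(0)$.
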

	
	The \textit{effective domain} $\operatorname{dom} f$ of an extended real-valued function $f:X\rightarrow\overline{\mathbb{R}}:=\mathbb{R}\cup\{+\infty\}$ is the set of points $x$ where $f(x)\in\mathbb{R}$. The function $f$ is \textit{proper} if  $\text{dom} f\ne\emptyset$.
	It is \textit{lower semicontinuous} if
	$$
	f(x)\leq\liminf_{y\rightarrow x}f(y)
	$$
	for all $x\in X$. The epigraph of $f$ is defined by
	$$
	\operatorname{epi} f:=\{(x,r):x\in\text{dom} f, r\geq f(x)\}.
	$$
	Suppose now that $f$ is a convex lower semicontinuous function, i.e. $\operatorname{epi} f$ is convex and closed in $X\times\mathbb{R}$. 
	A functional $x^*\in X^*$ is said to be a \textit{subgradient} of $f$ at $x\in X$, if $f(x)$ is finite and 
	$$
	f(y)-f(x)\geq \langle x^*,y-x\rangle, \quad \forall y\in X.
	$$
	The collection of all subgradients of $f$ at $x$ is called the \textit{subdifferential} of $f$ at $x$, that is,
	$$
	\partial f(x):=\{x^*\in X^*: f(y)-f(x)\geq \langle x^*,y-x\rangle,\;\forall y\in X\}.
	$$
	The function $f$ is said to be \textit{subdifferentiable} at $x$ if $f(x)$ is finite and $\partial f(x)\ne\emptyset$.  Clearly, $\partial f(x)$ is convex and weakly closed in $X^*$. The following result represents the graphical density of points of subdifferentiability of $f$ (see \cite{Borwein82} and \cite{BR65}).
	\begin{proposition}\label{Density}  Let $f$ be a proper lower semicontinuous convex function from $X$ into $\overline{\mathbb{R}}$. Then for any $\bar{x}\in\operatorname{dom} f$ and any $\varepsilon>0$ there exists $x\in X$ such that $\partial f(x)\ne\emptyset$ and 
		$$
		\|x-\bar{x}\|+|f(x)-f(\bar{x})|<\varepsilon.
		$$
	\end{proposition}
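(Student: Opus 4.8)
The plan is to deduce the statement from Ekeland's variational principle, which is the route underlying the Br{\o}ndsted--Rockafellar theorem cited. The idea is that although $f$ need not be subdifferentiable at $\bar{x}$ itself, one can tilt $f$ by a suitable continuous linear functional so that $\bar{x}$ becomes an approximate minimizer, and then replace that approximate minimizer by a genuine ``almost-minimizer'' produced by Ekeland's principle; such a point is automatically a point of subdifferentiability, and the quantitative conclusions of Ekeland's principle will keep it close to $\bar{x}$ both in norm and in value.

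In detail, fix $\bar{x}\in\operatorname{dom} f$ and $\varepsilon>0$, and set $\eta:=\varepsilon/2$. First I would show that the $\eta$-subdifferential
$$
\partial_\eta f(\bar{x}):=\{x^*\in X^*: f(y)\ge f(\bar{x})+\langle x^*,y-\bar{x}\rangle-\eta,\ \forall y\in X\}
$$
is nonempty: since $\operatorname{epi} f$ is closed and convex and the point $(\bar{x},f(\bar{x})-\eta)$ lies strictly below the graph, the Hahn--Banach separation theorem in $X\times\mathbb{R}$ yields a separating functional whose vertical component must be strictly positive (otherwise $\bar{x}\in\operatorname{dom} f$ would be separated from itself), and normalizing that component to $1$ produces an element $x_0^*\in\partial_\eta f(\bar{x})$. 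Put $M:=\|x_0^*\|$ and define the tilted function $g(y):=f(y)-\langle x_0^*,y-\bar{x}\rangle$, which is again proper, lower semicontinuous and convex, is bounded below by $f(\bar{x})-\eta$, and satisfies $g(\bar{x})=f(\bar{x})\le\inf g+\eta$. Now I would invoke Ekeland's variational principle on the complete space $X$ with the $\eta$-minimizer $\bar{x}$ and a radius $\lambda>0$ to be fixed below: it returns a point $x$ with $g(x)\le g(\bar{x})$, $\|x-\bar{x}\|\le\lambda$, and such that $x$ is the exact minimizer of $w\mapsto g(w)+(\eta/\lambda)\|w-x\|$. The Moreau--Rockafellar sum rule applies to this last function because the norm term is everywhere continuous, so $0\in\partial g(x)+(\eta/\lambda)\,\overline{\mathbb{B}}^{*}$, where $\overline{\mathbb{B}}^{*}$ is the closed unit ball of $X^*$; in particular $\partial g(x)\ne\emptyset$, and since $\partial g(x)=\partial f(x)-x_0^*$ this gives $\partial f(x)\ne\emptyset$.

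It remains to estimate the two quantities. From $g(x)\le g(\bar{x})$ one gets $f(x)-f(\bar{x})\le\langle x_0^*,x-\bar{x}\rangle\le M\lambda$, while $x_0^*\in\partial_\eta f(\bar{x})$ applied at $y=x$ gives $f(x)-f(\bar{x})\ge\langle x_0^*,x-\bar{x}\rangle-\eta\ge-M\lambda-\eta$, so that $|f(x)-f(\bar{x})|\le M\lambda+\eta$ and hence
$$
\|x-\bar{x}\|+|f(x)-f(\bar{x})|\le\lambda(1+M)+\eta.
$$
The main obstacle, and the only delicate point, is the \emph{order} in which the parameters are chosen: because the $\eta$-subgradients of $f$ at a boundary point of $\operatorname{dom} f$ may have norm tending to infinity as $\eta\downarrow 0$, one cannot couple $\lambda$ to $\eta$ beforehand. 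Instead I fix $\eta=\varepsilon/2$ once and for all, which fixes the finite number $M=\|x_0^*\|$, and only then choose the Ekeland radius small relative to $M$, say any $\lambda<\dfrac{\varepsilon}{2(1+M)}$; then $\lambda(1+M)+\eta<\varepsilon/2+\varepsilon/2=\varepsilon$, which completes the proof. Everything else here (nonemptiness of $\partial_\eta f$, Ekeland's principle, the convex sum rule) is standard, so the essential content is exactly this decoupling of $\eta$ from $\lambda$.
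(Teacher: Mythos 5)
Your proof is correct. The paper does not prove Proposition~\ref{Density} at all --- it is quoted with a citation to Br{\o}ndsted--Rockafellar and Borwein --- and what you have written is precisely the standard modern proof of that theorem: nonemptiness of $\partial_\eta f(\bar{x})$ by separating $(\bar{x},f(\bar{x})-\eta)$ from $\operatorname{epi}f$, tilting by $x_0^*$ to make $\bar{x}$ an $\eta$-minimizer, Ekeland's principle plus the Moreau--Rockafellar sum rule to produce a point of subdifferentiability, and the elementary estimates. Your emphasis on fixing $\eta$ (hence $M=\|x_0^*\|$) first and only then shrinking the Ekeland radius $\lambda$ is exactly the right point of care, since $\|x_0^*\|$ cannot be controlled uniformly in $\eta$ at boundary points of $\operatorname{dom}f$.
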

	
	Given a nonempty  set $S\subset X$, $\operatorname{int}S$ is the \textit{interior} of $S$, $\overline{S}$ is the \textit{closure} of $S$ and bd($S$) is the \textit{boundary} of $S$ with respect to strong topology on $X$. 
	Suppose now that $S$ is nonempty closed and convex. For every $x\in S$, the \textit{tangent cone} and the  \textit{normal cone}  of $S$ at $x$ (see \cite[Section~2.2.4]{BS00} or \cite[Section~4.2]{AubinFrankowska09}) are defined respectively as 
	\begin{equation}\label{NormalTangent}
	T(x;S):=\overline{\bigcup_{t>0}t^{-1}(S-x)},\quad 
	N(x;S):=\{x^*\in X^*: \sup_{y\in S}\langle x^*,y-x\rangle\leq 0\}.
	\end{equation}
	The tangent cone can be expressed in terms of sequences \cite[Proposition~4.2.1]{AubinFrankowska09} as
	\begin{equation}\label{TB}
	T(x;S)=\left\{v\in X: \exists\;\mbox{sequences}\; t_n\downarrow 0, v_n\rightarrow v\;\mbox{with}\;  x+t_nv_n\in S\;\mbox{for all}\;n\in\mathbb{N}\right\}.
	\end{equation}
	By the bipolar theorem \cite[Proposition~2.40]{BS00} we have the following dual relationships
	$$
	T(x;S)=\{v\in X: \sup_{x^*\in N(x;S)}\langle x^*,v\rangle\leq 0\},
	$$
	$$
	N(x;S)=\{x^*\in X^*: \sup_{v\in T(x;S)}\langle x^*,v\rangle\leq 0\}.
	$$
	The function $I_S: X\rightarrow\overline{\mathbb{R}}$ defined by 
	\begin{equation}\label{Indicator}
	I_S(x):=
	\begin{cases}
	0 & \text{if}\; x\in S,\\
	+\infty & \text{otherwise},
	\end{cases}
	\end{equation}
	is called the \textit{indicator function} of $S$ and its dual function $\sigma_S:X^*\rightarrow\overline{\mathbb{R}}$,
	\begin{equation}\label{Support}
	\sigma_S(x^*):=\sup\{\langle x^*,s\rangle: s\in S\}, \quad \forall x^*\in X^*,
	\end{equation}
	is called the \textit{support function} of $S$ (see \cite[p. 79]{Zalinescu02}). 
	
	Similarly, for a nonempty closed and convex set $K\subset X^*$ and $x^*\in K$, we can define the normal cone $N(x^*;K)\subset X$ and the tangent cone $T(x^*;K)\subset X^*$ of $K$ at $x^*$ as \eqref{NormalTangent}. The indicator function $I_K:X^*\rightarrow\mathbb{R}$ and the support function $\sigma_K:X\rightarrow\mathbb{R}$ are also defined similarly as \eqref{Indicator} and \eqref{Support} respectively. Since $X$ is reflexive,  both $\sigma_S$ and $\sigma_K$ are  lower semicontinuous and convex. 
	
	\medskip
	For the set-valued operator $A:X\rightrightarrows X^*$, the \textit{domain} of $A$ is $D(A):=\{x\in X: Ax\ne\emptyset\}$ and $G(A): =\{(x,x^*)\in X\times X^*: x^*\in Ax\}$ is the \textit{graph} of $A$. 
	Recall that $A$ is \textit{monotone},  iff for all $(x,x^*),(y,y^*)\in G(A)$, one has $\langle x^*-y^*, x-y\rangle\geq 0$, and \textit{maximally monotone} iff $A$ is monotone and $A$ has no proper monotone extension (in the sense of graph inclusion). The duality mapping, the subdifferential of a lower semicontinuous proper convex function, the normal cone to a closed convex set are  examples of maximal monotone operators (see \cite[Theorem~A]{Rockafellar70}). 
	The maximal monotone operator $A$ has closed convex values and is \textit{demiclosed} \cite[Proposition~2.1]{Barbu10}, i.e., $A$ satisfies
	$$
	\left[x_n^*\in Ax_n (\forall n\in\mathbb{N}), x_n^*\rightarrow x^*, x_n\rightharpoonup x\right]\Longrightarrow\left[x^*\in Ax\right],
	$$
	$$
	\left[x_n^*\in Ax_n (\forall n\in\mathbb{N}), x_n^*\rightharpoonup x^*, x_n\rightarrow x\right]\Longrightarrow \left[x^*\in Ax\right].
	$$
	Since $X$ is reflexive, $D(A)$ is \textit{nearly convex} (see \cite[Corollary~3.4]{BY14}), i.e., $\overline{D(A)}$ is convex. Moreover, 
	if $\operatorname{int}D(A)\ne\emptyset$ then $\operatorname{int}D(A)=\operatorname{int}\overline{D(A)}$
	(see \cite[Theorem~27.1 and Theorem 27.3]{Simons08}) and $A$ is locally bounded at every
	$x\in \operatorname{int}D(A)$ (see \cite[Theorem 2.28]{Phelps93} or \cite[Theorem~1]{Rockafellar69}), i.e., there exist $r>0$ and $M>0$ such that $x+r\mathbb{B}\subset D(A)$ and 
	$$
	\sup_{y^*\in Ay}\|y^*\|\leq M, \quad \forall y\in x+r\mathbb{B}.
	$$
	Conversely, if $x\in\overline{D(A)}$ and $A$ is locally bounded at $x$, then $x\in	\operatorname{int}D(A)$ (see \cite[Theorem~1.14]{Phelps97} or \cite[Theorem~3.11.15]{Zalinescu02}). 
	
	\medskip\noindent
	If $X^*$ is uniformly convex then for every $x\in D(A)$, since $Ax$ is nonempty closed and convex, there exists a unique point $x^*_{\text{min}}\in Ax$ such that
	$$
	\|x^*_{\text{min}}\|=\min\{\|x^*\|: x^*\in Ax\}
	$$
	(see \cite[Exercise~3.32]{Brezis11}). Therefore, the single-valued nonlinear operator
	$$
	A^{\circ}:D(A)\subset X\rightarrow X^*, \quad A^{\circ}x:=x^*_{\text{min}}
	$$
	is well-defined; it is called the \textit{minimal-norm selection} of $A$. Let us end this section by recalling some results related to the Yosida approximation of a maximal monotone operator (see \cite[Proposition~2.2]{Barbu10}).
	\begin{proposition} \label{Yosida}
		Suppose that both $X$ and $X^*$ are reflexive and strictly convex, and $A:X\rightrightarrows X^*$ is a maximal monotone operator. For every $x\in X$ and $\lambda>0$, there exists a unique $x_\lambda\in X$ such that
		$$
		0\in J(x_\lambda-x)+\lambda A(x_\lambda).  
		$$
		If $x\in D(A)$ then $x_\lambda\rightarrow x$ and $\lambda^{-1}J(x-x_\lambda)\rightharpoonup A^\circ(x)$ as $\lambda\rightarrow 0$. Moreover, if $X^*$ is uniformly convex, then $\lambda^{-1}J(x-x_\lambda)\rightarrow A^\circ(x)$ as $\lambda\rightarrow 0$ for every $x\in D(A)$.
	\end{proposition}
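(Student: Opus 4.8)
The plan is to handle the three assertions in turn: existence and uniqueness of $x_\lambda$ via a surjectivity theorem combined with strict monotonicity of $J$; the strong convergence $x_\lambda\to x$ via a one-line a priori estimate; and the convergence of the Yosida approximation via weak compactness, demiclosedness, and the minimal-norm characterization.

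First I would recast the inclusion. Writing $0\in J(x_\lambda-x)+\lambda A(x_\lambda)$ as $-\lambda^{-1}J(x_\lambda-x)\in A(x_\lambda)$ and substituting $w=x_\lambda-x$ turns it into $0\in\lambda A(w+x)+J(w)$. The translate $w\mapsto A(w+x)$ is again maximal monotone, hence so is $\lambda A(\cdot+x)$; since $X$ is reflexive (and $X,X^*$ strictly convex, so $J$ is single-valued by Proposition~\ref{DualityMappping}), the surjectivity theorem for maximal monotone operators, asserting that $T+J$ has full range $X^*$ for every maximal monotone $T$, guarantees $0$ lies in the range of $\lambda A(\cdot+x)+J$, which produces a solution $w$ and hence $x_\lambda=w+x$. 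For uniqueness, if $z_1,z_2$ both solve, subtracting the memberships $-\lambda^{-1}J(z_i-x)\in A(z_i)$ and invoking monotonicity of $A$ gives $\langle J(z_1-x)-J(z_2-x),(z_1-x)-(z_2-x)\rangle\le 0$; since $X$ is strictly convex the duality mapping is strictly monotone, forcing $z_1=z_2$.

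Next, for $x\in D(A)$ I would fix any $x^*\in Ax$ and test the monotonicity inequality of $A$ on the pairs $(x,x^*)$ and $(x_\lambda,-\lambda^{-1}J(x_\lambda-x))$. Using $\langle J(x_\lambda-x),x_\lambda-x\rangle=\|x_\lambda-x\|^2$ yields the estimate $\|x_\lambda-x\|\le\lambda\|x^*\|$, whence $x_\lambda\to x$ as $\lambda\to 0$. Taking $x^*=A^{\circ}x$ and recalling $\|J(u)\|=\|u\|$ gives the uniform bound $\|\lambda^{-1}J(x-x_\lambda)\|\le\|A^{\circ}x\|$. Setting $A_\lambda x:=\lambda^{-1}J(x-x_\lambda)=-\lambda^{-1}J(x_\lambda-x)\in A(x_\lambda)$, reflexivity of $X^*$ makes $\{A_\lambda x\}$ relatively weakly sequentially compact, so along any sequence $\lambda_n\to 0$ we may extract $A_{\lambda_n}x\rightharpoonup\xi$. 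Since $A_{\lambda_n}x\in A(x_{\lambda_n})$, $x_{\lambda_n}\to x$, and $A_{\lambda_n}x\rightharpoonup\xi$, the demiclosedness of $A$ gives $\xi\in Ax$, while weak lower semicontinuity of the norm yields $\|\xi\|\le\|A^{\circ}x\|$. As $A^{\circ}x$ is the unique minimal-norm element of $Ax$ (strict convexity of $X^*$), we get $\xi=A^{\circ}x$; every subsequential weak limit being $A^{\circ}x$, the whole family converges weakly. Finally, if $X^*$ is uniformly convex, the same inequalities force $\|A_\lambda x\|\to\|A^{\circ}x\|$, and weak convergence together with norm convergence and the Kadec--Klee property of the uniformly convex space $X^*$ upgrade this to strong convergence $A_\lambda x\to A^{\circ}x$.

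The main obstacle is the existence step, which rests on the surjectivity theorem for maximal monotone operators in reflexive spaces (not recorded in the preliminaries and to be cited); the other delicate point is identifying the weak limit with the minimal-norm selection, where demiclosedness and the uniqueness of the minimal-norm element are both indispensable. The remaining estimates are the routine monotonicity computations indicated above.
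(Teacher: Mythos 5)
Your argument is correct, but note that the paper does not prove this statement at all: it is quoted as a preliminary directly from Barbu's monograph (cited as Proposition~2.2 there), and your proof is essentially the standard one found in that reference --- existence via Rockafellar's surjectivity theorem $R(T+J)=X^*$ applied to the translated, rescaled operator, uniqueness from strict monotonicity of $J$, the a priori estimate $\|x_\lambda-x\|\le\lambda\|x^*\|$, identification of weak cluster points with $A^\circ(x)$ through demiclosedness and uniqueness of the minimal-norm element, and the Kadec--Klee upgrade to strong convergence when $X^*$ is uniformly convex. The only external ingredient is the surjectivity theorem, which you correctly flag as needing a citation; everything else follows from facts already recorded in the paper's Section~2.
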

\section{Representation Formulas in Uniformly Convex Spaces}
In this section, we will establish representation formulas for faces and supports functions in uniformly convex spaces with uniformly convex duals. First, we recall the notion of the face associated with direction of the values of a maximal monotone operator. 
\begin{definition} Let $X$ be a real reflexive Banach space and
	$A:X\rightrightarrows X^*$ a maximal monotone operator.  For $x\in D(A)$ and $v\in X$, we define the set
	$$
	A(x;v):=\{x^*\in Ax: \langle x^*,v\rangle=\sigma_{Ax}(v)\}. 
	$$
	If $v\ne 0$ then $A(x;v)$  is called the face associated with direction $v$ of the value  $Ax$. 
\end{definition} 
\begin{remark} 
	{\rm By the definition of the support function, $x^*\in A(x;v)$ if and only if $v\in N_{Ax}(x^*)$, i.e.,  $x^*\in Ax$ and 
		$$
		\langle y^*-x^*,v\rangle\leq 0, \quad \forall y^*\in Ax. 
		$$
		Moreover,
		$A(x;v)$ is the subdifferential of the convex function $\sigma_{Ax}$ at $v$  (see \cite[Proposition~2.121]{BS00}) and so it is closed and convex.  
	}
\end{remark}
We give two examples of faces associated with directions of values of maximal monotone operators. 
\begin{example} $ $
	\begin{itemize}
		\item Let $X=\mathbb{R}$ and $A:\mathbb{R}\rightrightarrows \mathbb{R}$ a maximal monotone operator  given by 
		$$
		Ax=\begin{cases}
		\{-1\} & \text{if}\quad x<0,\\
		[-1,1] & \text{if} \quad x=0,\\
		\{1\} & \text{if} \quad x>0.
		\end{cases}
		$$
		If $x\ne 0$ then $A(x;v)=Ax$ for all $v\in\mathbb{R}$ (since $Ax$ is singleton). Otherwise,  
		$$
		A(0;v)=\begin{cases}
		\{1\} & \text{if}\quad v>0,\\
		[-1,1] & \text{if} \quad v=0,\\
		\{-1\} & \text{if} \quad v<0.
		\end{cases}
		$$
		\item Let $X$ be a real reflexive Banach space and a maximal monotone operator $Ax=N(x;\overline{\mathbb{B}})$, i.e.,
		$$
		Ax=\begin{cases}
		\mathbb{R}_+J(x) & \text{if}\quad \|x\|=1,\\
		\{0_{X^*}\} & \text{if}\quad \|x\|<1,\\
		\emptyset & \text{if} \quad \|x\|>1.
		\end{cases}
		$$
		If $x\in\mathbb{B}$ then $A(x;v)=Ax=\{0_{X^*}\}$ for all $v\in X$. If $x\in\operatorname{bd}(\mathbb{B})$, i.e., $\|x\|=1$, then 
		$$
		A(x;v)=\begin{cases}
		\displaystyle\bigcup_{\xi\in S}\mathbb{R}_+\xi & \text{if}\quad \displaystyle\sigma_{J(x)}(v)=0,\\
		\{0_{X^*}\} & \text{if} \quad \displaystyle\sigma_{J(x)}(v)<0,\\
		\emptyset & \text{if} \quad \displaystyle\sigma_{J(x)}(v)>0,
		\end{cases}
		$$
		where $S:=\{\xi\in X^*: \xi\in J(x), \langle \xi, v\rangle=0\}$. 
	\end{itemize}
\end{example}
\begin{definition}\label{LocalValues}
	Let $X$ be a real reflexive Banach space and
	$A:X\rightrightarrows X^*$ a maximal monotone operator. For every $x\in D(A)$ and $v\in X$ we define the following sets
	$$
	\begin{array}{rl}
	\displaystyle\Limsup_{w\rightarrow v, t\downarrow 0}A(x+tw):=
	\Big\{x^*\in X^*\, |&\exists\;\mbox{sequences}\; w_n\rightarrow  v, t_n\downarrow 0\;\mbox{and}\;x^*_n\rightarrow x^*\\
	&\mbox {with}\; x_n^*\in A(x+t_nw_n)\; \mbox{for all}\; n\in\mathbb{N}
	\Big\},
	\end{array}
	$$
	$$
	\begin{array}{rl}
	\displaystyle w-\Limsup_{w\rightarrow v, t\downarrow 0}A(x+tw):=
	\Big\{x^*\in X^*\, |&\exists\;\mbox{sequences}\; w_n\rightarrow v, t_n\downarrow 0\;\mbox{and}\;x^*_n\rightharpoonup x^*\\
	&\mbox {with}\; x_n^*\in A(x+t_nw_n)\; \mbox{for all}\; n\in\mathbb{N}
	\Big\}.
	\end{array}
	$$
\end{definition}
\begin{remark}
	{\rm Observe that we have the following inclusions
		\begin{equation}\label{Inclusion}
		\displaystyle\Limsup_{w\rightarrow v, t\downarrow 0}A(x+tw)
		\subset
		\displaystyle w-\Limsup_{w\rightarrow v, t\downarrow 0}A(x+tw)\subset A(x;v). 
		\end{equation}
		The first inclusion follows from Definition~\ref{LocalValues} while the second one is proved similarly as in the proof of \cite[Theorem~3.2]{HB19}. 
	}
\end{remark}
When the operator $A$ is defined on a uniformly convex space with uniformly convex dual we obtain the equalities in \eqref{Inclusion}.
\begin{theorem}\label{Representation1}
	Let $X$ be a real Banach space such that $X$ and $X^*$ are uniformly convex. Let $A:X\rightrightarrows X^*$ be a maximal monotone operator. For every $x\in D(A)$ and $v\in X\setminus\{0\}$ we have 
	\begin{equation}\label{FacesValues}
	A(x;v)=\Limsup_{w\rightarrow v, t\downarrow 0}A(x+tw)=w-\Limsup_{w\rightarrow v, t\downarrow 0}A(x+tw).
	\end{equation}
\end{theorem}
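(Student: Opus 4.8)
The three sets in \eqref{FacesValues} are already ordered by the chain of inclusions \eqref{Inclusion}, so to prove the theorem it suffices to establish the single reverse inclusion $A(x;v)\subseteq\Limsup_{w\to v,\,t\downarrow 0}A(x+tw)$; the weak outer limit is then squeezed between two sets that have been shown to coincide. The plan is to fix $x^*\in A(x;v)$ and to manufacture, by means of the Yosida approximation, pairs $(z_t,\xi_t)\in G(A)$ with $z_t=x+tw_t$, $t\downarrow 0$, $w_t\to v$, and $\xi_t\to x^*$ strongly.

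A translation explains what value to aim for. Replacing $A$ by the maximal monotone operator $\hat A:=A-x^*$ leaves the domain unchanged, carries $G(A)$ to $G(\hat A)$, and turns the hypothesis $x^*\in A(x;v)$ into $0\in\hat A x$ together with $\langle \hat y^*,v\rangle\le 0$ for all $\hat y^*\in\hat A x$. Since $\|0\|$ is the minimal norm attained on $\hat A x$, we have $\hat A^{\circ}(x)=0$, which is precisely the covector that Proposition~\ref{Yosida} produces in the limit at $x$. This identifies the target $x^*$ as a minimal-norm value and guides the construction below, which I carry out for $A$ directly.

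The construction resolves a point that is simultaneously pushed in the primal direction $v$ and biased, through the duality map, towards the target covector. For $t>0$ let $z_t$ be the unique resolvent point furnished by Proposition~\ref{Yosida} for the operator $A$, the parameter $\lambda=t$, and the base point $x+tv+J^{-1}(tx^*)$, and set $\xi_t:=t^{-1}J\big(x+tv+J^{-1}(tx^*)-z_t\big)\in Az_t$. The summand $J^{-1}(tx^*)$ is chosen so that, in the degenerate case $x^*\in A(x+tv)$, it forces $z_t=x+tv$ and $\xi_t=x^*$ exactly; in general it keeps $\xi_t$ biased towards $x^*$. The resolvent identity reads
\begin{equation*}
z_t-x=tv+\big(J^{-1}(tx^*)-J^{-1}(t\xi_t)\big),\qquad \xi_t\in Az_t .
\end{equation*}
Testing the monotonicity of $A$ against $(x,x^*)$ and invoking the monotonicity of the duality map $J^{-1}$ of $X^*$, I would first extract the sign relation $\langle\xi_t-x^*,v\rangle\ge 0$ and a bound showing that $\{\xi_t\}$ stays bounded as $t\downarrow 0$. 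Since then $z_t\to x$, the demiclosedness of $A$ places every weak cluster point of $\xi_t$ in $Ax$, so $\limsup_{t\downarrow 0}\langle\xi_t,v\rangle\le\sigma_{Ax}(v)=\langle x^*,v\rangle$, and therefore $\langle\xi_t-x^*,v\rangle\to 0$. Feeding this back into the monotonicity estimate yields $\|\xi_t-x^*\|\to 0$, after which $w_t:=t^{-1}(z_t-x)=v+t^{-1}\big(J^{-1}(tx^*)-J^{-1}(t\xi_t)\big)\to v$ by the uniform continuity of $J^{-1}$ on bounded sets. A diagonal choice of $t\downarrow 0$ then delivers the required sequence and proves $x^*\in\Limsup_{w\to v,\,t\downarrow 0}A(x+tw)$.

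The main obstacle is the final convergence: the two demands pull against each other, since forcing the value $\xi_t$ to approach the \emph{particular} face element $x^*$ and forcing the base point to move in the direction $v$ compete through the single parameter $\lambda=t$. The resolution rests on uniform convexity. It guarantees that $J$ and $J^{-1}$ are single-valued and uniformly continuous on bounded sets (Proposition~\ref{DualityMappping}), which controls the direction; and, through the strong convergence in Proposition~\ref{Yosida} together with the Kadec--Klee property, it upgrades the weak identification of the limit---which on its own only locates cluster points somewhere in the face $A(x;v)$---to strong convergence onto the one prescribed covector $x^*$. A secondary point worth recording is that no interiority of $D(A)$ is needed: the resolvent points $z_t$ lie automatically in $D(A)$, so the approximating sequence stays in the domain even when $x+tv$ does not, which is exactly what keeps the argument valid for tangential directions $v$.
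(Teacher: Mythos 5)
Your proof is correct, but it reaches the key inclusion $A(x;v)\subset\Limsup_{w\rightarrow v, t\downarrow 0}A(x+tw)$ by a genuinely different mechanism than the paper. The paper keeps the resolvent based at $x$ and shifts the \emph{operator}: it sets $By:=Ay-J(v)-x^*$, uses the face inequality $\langle x^*-y^*,v\rangle\geq 0$ to show $B^{\circ}x=-J(v)$, and then simply quotes the strong-convergence half of Proposition~\ref{Yosida} (namely $\lambda^{-1}J(x-x_\lambda)\rightarrow B^{\circ}x$ when $X^*$ is uniformly convex) to obtain $w_n:=n(x_n-x)\rightarrow J^{-1}(J(v))=v$ and $-J(w_n)+J(v)+x^*\rightarrow x^*$ with $-J(w_n)+J(v)+x^*\in A(x+(1/n)w_n)$. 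You instead keep $A$ and shift the \emph{base point} to $x+t\bigl(v+J^{-1}(x^*)\bigr)$, which yields $w_t=v+J^{-1}(x^*)-J^{-1}(\xi_t)$ and reduces everything to the strong convergence $\xi_t\rightarrow x^*$; you then prove that convergence by hand: monotonicity against $(x,x^*)$ together with monotonicity of $J^{-1}$ gives $0\leq\langle\xi_t-x^*,J^{-1}(\xi_t)-J^{-1}(x^*)\rangle\leq\langle\xi_t-x^*,v\rangle$ and boundedness of $\{\xi_t\}$; demiclosedness plus the face condition $\sigma_{Ax}(v)=\langle x^*,v\rangle$ force $\langle\xi_t-x^*,v\rangle\rightarrow 0$; and a Radon--Riesz argument in the uniformly convex $X^*$ upgrades this to norm convergence. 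Both routes rest on the same two ingredients (the face inequality and uniform convexity of $X^*$), but the paper gets the hard convergence for free from Proposition~\ref{Yosida}, whereas you re-derive it; in exchange your argument makes explicit where Kadec--Klee enters and avoids computing a minimal-norm selection. Two points to tighten. First, the motivational remark that $\hat A^{\circ}(x)=0$ for $\hat A:=A-x^*$ is true for \emph{every} $x^*\in Ax$ and does not encode membership in the face, so it motivates nothing; the direction $v$ must enter the construction, as it does through your base-point shift (or through the paper's $-J(v)$ shift). Second, the step ``feeding this back yields $\|\xi_t-x^*\|\rightarrow 0$'' deserves to be written out: decompose
\begin{equation*}
\langle\xi_t-x^*,J^{-1}(\xi_t)-J^{-1}(x^*)\rangle=\bigl(\|\xi_t\|-\|x^*\|\bigr)^2+\bigl(\|\xi_t\|\|x^*\|-\langle\xi_t,J^{-1}(x^*)\rangle\bigr)+\bigl(\|\xi_t\|\|x^*\|-\langle x^*,J^{-1}(\xi_t)\rangle\bigr)
\end{equation*}
into three nonnegative terms, conclude $\|\xi_t\|\rightarrow\|x^*\|$ and $\langle\xi_t,J^{-1}(x^*)\rangle\rightarrow\|x^*\|^2$, and apply uniform convexity of $X^*$ to the normalized vectors. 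With these details supplied the argument is complete.
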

\begin{proof} 
	From \eqref{Inclusion}, to get \eqref{FacesValues} it suffices to 
	check
	\begin{equation}\label{LeftInclusion}
	A(x;v) \subset \underset{w \to v, t \downarrow 0}{\operatorname{Limsup}}\,A(x+tw).
	\end{equation}
	Suppose that $x^*\in A(x;v)$. Let $J$ be the duality mapping on $X$. Since $X$ and $X^*$ are uniformly convex, by  Proposition~\ref{DualityMappping},  both $J$ and $J^{-1}$ are single-valued and continuous with respect to strong topology on $X$ and $X^*$.
	Consider the operator $B:X\rightrightarrows X^*$ given by 
	$$
	By:= Ay-J(v)-x^*,\quad \forall y\in X.
	$$ 
	Clearly, $B$ is  maximal monotone and 
	$\operatorname{dom} B=\operatorname{dom} A$. We first show that $B^{\circ}x=-J(v)$.
	Indeed, since $x^*\in A(x;v)$ we have $x^*\in Ax$ and so
	$$
	-J(v)=x^*-J(v)-x^*\in Ax-J(v)-x^*=Bx.
	$$
	Moreover,  for every $y^*\in Ax$ we have  $\langle x^*-y^*,v\rangle\geq 0$  and 
	\begin{eqnarray*}
		\|-J(v)\| &=&\|v\|^{-1}\langle J(v), v\rangle\\
		&\leq&\|v\|^{-1}\langle J(v)+x^*-y^*, v\rangle\\
		&\leq&\|v\|^{-1}\|y^*-J(v)-x^*\|\|v\|\\
		&=&\|y^*-J(v)-x^*\|.
	\end{eqnarray*}
	Applying Proposition~\ref{Yosida} for the maximal monotone operator $B$ and $x\in\text{dom}B$, we can construct a sequence $\{x_n\}\subset X$ such that 
	\begin{equation}\label{Inclusion1}
	0\in J(x_n-x)+\frac{1}{n}B(x_n),
	\end{equation}
	\begin{equation}\label{Limit1}
	\lim_{n\rightarrow\infty}x_n=x, \quad \text and \quad \lim_{n\rightarrow\infty}[nJ(x-x_n)]=-J(v).
	\end{equation}
	Consider the sequence $\{w_n\}\subset X$ given by $w_n:=n(x_n-x)$ for every $n\in\mathbb{N}$. Then, by \eqref{Inclusion1} and \eqref{Limit1}, we have
	$$
	-J(w_n)+J(v)+x^*\in B(x_n)+J(v)+x^*=A(x_n)=A(x+(1/n)w_n),
	$$
	$$
	\lim_{n\rightarrow\infty}w_n=\lim_{n\rightarrow\infty}[J^{-1}J(n(x_n-x))]=J^{-1}[J(v)]=v,
	$$
	$$
	\lim_{n\rightarrow\infty}[-J(w_n)+J(v)+x^*]=x^*.
	$$
	It follows that $x^*\in  \underset{w \to v, t \downarrow 0}{\operatorname{Limsup}}\,A(x+tw)$ and so \eqref{LeftInclusion} holds. 
	$\hfill\Box$
\end{proof}
\begin{remark}{\rm $ $
		\begin{itemize}
			\item Theorem~\ref{Representation1} generalizes \cite[Theorem~3.2]{HB19} from Hilbert spaces to uniformly convex Banach spaces having uniformly convex duals. Our proof is based on the properties of Yosida approximation of maximal monotone operators and it is simpler than the proof of  \cite[Theorem~3.2]{HB19} where the authors used 
			the properties of solutions of  differential inclusions governed by maximal monotone operators. 
			\item We formulate in the context of uniformly convex spaces since our proof strongly depends on the single-valuedness and the strong continuity of the duality mapping and its inverse.
		\end{itemize}
	} 
\end{remark}

\medskip
The formulas in  \eqref{FacesValues} allow us to characterize the boundaries of the values of maximal monotone operators, by means only of the values at nearby points, which are close enough to the reference point but distinct of it (see \cite[Theorem~3.1]{HB19} in Hilbert setting). 

\begin{corollary} Let $X$ be a real Banach space such that $X$ and $X^*$ are uniformly convex. Let $A:X\rightrightarrows X^*$ be a maximal monotone operator. Then, for every $x\in D(A)$, we have
	\begin{equation}\label{RBoundary}
	\begin{array}{rl}
	\text{\rm bd}(Ax)= \displaystyle\Limsup_{y\rightarrow x, y\ne x}A(y):=	\Big\{x^*\in X^*\, |&\exists\;\mbox{sequences}\; y_n\rightarrow  x\;\mbox{and}\;y^*_n\rightarrow x^* \mbox {with}\\
	&y_n\;\ne x\;\mbox{and}\; y_n^*\in A(y_n) \; \mbox{for all}\; n\in\mathbb{N}
	\Big\}.
	\end{array}
	\end{equation}
\end{corollary}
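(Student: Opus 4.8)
The plan is to prove the two inclusions $\Limsup_{y\rightarrow x, y\ne x}A(y)\subseteq\operatorname{bd}(Ax)$ and $\operatorname{bd}(Ax)\subseteq\Limsup_{y\rightarrow x, y\ne x}A(y)$ separately; write $R:=\Limsup_{y\rightarrow x, y\ne x}A(y)$ for brevity. The one preliminary fact I would record is that $R$ is closed, being an upper limit of sets: if $x_k^*\in R$ and $x_k^*\rightarrow x^*$, a diagonal extraction from the witnessing sequences for each $x_k^*$ produces points $y_k\rightarrow x$ with $y_k\ne x$ and selections converging to $x^*$, so $x^*\in R$.

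For $R\subseteq\operatorname{bd}(Ax)$, take $x^*\in R$ with witnesses $y_n\rightarrow x$, $y_n\ne x$, $y_n^*\in A(y_n)$, $y_n^*\rightarrow x^*$. Since $y_n^*\rightarrow x^*$ strongly and $y_n\rightharpoonup x$, the demiclosedness of $A$ recalled in Section~2 gives $x^*\in Ax$. To see $x^*\notin\operatorname{int}(Ax)$ I argue by contradiction: if $x^*\in\operatorname{int}(Ax)$, pick $\varepsilon>0$ with $x^*+\varepsilon\overline{\mathbb{B}^*}\subseteq Ax$, set $v_n:=(y_n-x)/\|y_n-x\|$, and use monotonicity $\langle y_n^*-z^*,v_n\rangle\geq 0$ for every $z^*\in Ax$. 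Taking $z^*=x^*+\varepsilon u^*$ with $\|u^*\|\leq 1$ and then the supremum over such $u^*$, which equals $\|v_n\|=1$, yields $\langle y_n^*-x^*,v_n\rangle\geq\varepsilon$, contradicting $|\langle y_n^*-x^*,v_n\rangle|\leq\|y_n^*-x^*\|\rightarrow 0$. Hence $x^*\in Ax\setminus\operatorname{int}(Ax)=\operatorname{bd}(Ax)$.

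For the reverse inclusion the link is Theorem~\ref{Representation1}. I would first check that every face $A(x;v)$ with $v\ne 0$ is contained in $R$: given $x^*\in A(x;v)=\Limsup_{w\rightarrow v, t\downarrow 0}A(x+tw)$, the witnessing points $y_n:=x+t_nw_n$ satisfy $y_n\rightarrow x$ and, since $w_n\rightarrow v\ne 0$, also $y_n\ne x$ for large $n$, while the attached selections converge to $x^*$; thus $x^*\in R$. Because $X$ is reflexive, a point of $Ax$ is a support point exactly when some nonzero $v\in X=X^{**}$ attains $\sigma_{Ax}(v)$ there, so the set of support points of $Ax$ is precisely $\bigcup_{v\ne 0}A(x;v)$, and all support points therefore lie in $R$.

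The main obstacle is that when $Ax$ has empty interior this union of faces may be strictly smaller than $\operatorname{bd}(Ax)$, i.e. not every boundary point is a support point. I overcome this with the classical Bishop--Phelps theorem (see, e.g., \cite{Phelps93}): the support points of the closed convex set $Ax$ are dense in $\operatorname{bd}(Ax)$. Combining density with the closedness of $R$ established above gives $\operatorname{bd}(Ax)\subseteq\overline{R}=R$, which together with the first inclusion yields the equality \eqref{RBoundary}. Note that reflexivity, guaranteed here by uniform convexity through the Milman--Pettis theorem, is exactly what lets me identify supporting directions with elements of $X$ and apply the norm duality in the contradiction step.
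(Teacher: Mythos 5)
Your proof is correct and reaches the result by the same overall skeleton as the paper's --- both directions hinge on Theorem~\ref{Representation1} together with the density of the union of faces $\bigcup_{v\ne 0}A(x;v)$ in $\operatorname{bd}(Ax)$ --- but the two key sub-steps are justified by different means. For the density step the paper does not invoke Bishop--Phelps by name: it applies \cite[Theorem~1.14]{Phelps97} (a maximal monotone operator locally bounded at a point of the closure of its domain has that point in the interior of the domain) to the normal cone map $\partial I_{Ax}$, concluding that at every $x^*\in\operatorname{bd}(Ax)$ this map fails to be locally bounded, so nearby points of $Ax$ carry nonzero normals and hence lie in faces $A(x;v_n)$ with $v_n\ne 0$. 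This is in effect a monotone-operator-theoretic proof of exactly the Bishop--Phelps density of support points that you cite, so the two routes are interchangeable: yours outsources the analytic content to a classical theorem, the paper's keeps it inside the toolbox already assembled in Section~2. For the inclusion $\Limsup_{y\rightarrow x,\, y\ne x}A(y)\subset\operatorname{bd}(Ax)$, your quantitative argument --- testing monotonicity against $x^*+\varepsilon u^*$ and taking the supremum over the unit ball to get $\langle y_n^*-x^*,v_n\rangle\ge\varepsilon$, contradicting $\|y_n^*-x^*\|\rightarrow 0$ --- is more direct than the paper's, which instead notes that $x_n^*\in\operatorname{int}(Ax)$ would place $x_n^*$ in $A(x;x_n-x)\subset\operatorname{bd}(Ax)$ and reuses the boundary identity; both are sound, and your explicit closedness check for the upper limit set is used only implicitly in the paper.
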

\begin{proof} Let $x\in D(A)$. Observe that $Ax$ can be represented by its faces in the directions as
	\begin{equation}\label{Boundary}
	\text{\rm bd} (Ax)=\overline{\left(\bigcup_{v\ne 0}A(x;v)\right)}.
	\end{equation}
	Indeed, by the closeness of $\operatorname{bd}(Ax)$,  the set on the right hand side of \eqref{Boundary}  is the subset of $\operatorname{bd}(Ax)$. Hence, we only need to check the reverse inclusion in  \eqref{Boundary}.
	If $x^*\in \operatorname{bd}(Ax)$, since $Ax=\operatorname{dom}I_{Ax}$,  then $x^*\in\overline{\operatorname{dom}I_{Ax}}$ and $x^*\notin\operatorname{int}(\operatorname{dom}I_{Ax})$. Therefore, by the maximal monotonicity of the subdifferential mapping, $\partial I_{Ax}$ is not locally bounded at $x^*$ \cite[Theorem~1.14]{Phelps97}, i.e.,  we can find a sequence $\{x_n^*\}\subset Ax$ such that $x_n^*\rightarrow x^*$ and $N(x_n^*;Ax)=\partial I_{Ax}(x_n^*)$ is not bounded for every $n\in\mathbb{N}$. Let $\{v_n\}\subset X$ be such that $v_n\in N(x_n^*;Ax)\setminus\{0\}$ for every $n\in\mathbb{N}$. Then, $x_n^*\in A(x;v_n)$ with $v_n\ne 0$ for every $n\in\mathbb{N}$ and so $x^*$ belongs the set on the right hand side of \eqref{Boundary}.  
	
	Now we use \eqref{FacesValues} and \eqref{Boundary}  to get \eqref{RBoundary}. We have,
	$$
	\text{\rm bd}(Ax)=\overline{\left(\bigcup_{v\ne 0}\underset{w \to v, t \downarrow 0}{\operatorname{Limsup}}\,A(x+tw)\right)}\subset\overline{\Limsup_{y\rightarrow x, y\ne x}A(y)}= \Limsup_{y\rightarrow x, y\ne x}A(y).
	$$
	Suppose that $\displaystyle x^*\in \Limsup_{y\rightarrow x, y\ne x}A(y)$. Then, there exist sequences  $\{x_n\}\subset X$ and $\{x_n^*\}\subset X^*$ such that $x_n\rightarrow x, x_n^*\rightarrow x^*$ and $x_n\ne x, x_n^*\in Ax_n$ for every $n\in\mathbb{N}$. By the maximal monotonicity of $A$, we have $x^*\in Ax$. We will show that $x^*\in\text{bd}(Ax)$.  Suppose on the contrary that $x^*\in\text{int}(Ax)$. Then, for sufficiently large $n$, we have $x_n^*\in \text{int}(Ax)\subset Ax$ and so $x_n^*\in A(x;x_n-x)$ with $x_n-x\ne 0$. By  \eqref{Boundary}, for sufficiently large $n$, $x_n^*\in \text{bd}(Ax)$ which is a contradiction. 
	$\hfill\Box$ 
\end{proof}

\medskip
Now, we use Theorem~\ref{Representation1} to obtain a representation for the support function of the values of $A$ via its minimal-norm selection $A^{\circ}$. First, we consider a relationship of $A(x;v)$ and $A^{\circ}$ when $A(x;v)$ is nonempty.  
\begin{lemma}\label{MinimalFace}  Suppose that $X$ and $X^*$ are uniformly convex Banach spaces. Let $A:X\rightrightarrows X^*$ be a maximal monotone operator. If $x\in D(A)$ and $v\in X\setminus\{0\}$ such that $A(x;v)\ne\emptyset$ then 
	\begin{equation}\label{Minimal}
	\begin{array}{rl}
	w-\underset{\substack{w\rightarrow v, t\downarrow 0\\ x+tw\in D(A)}}{\operatorname{Limsup}}\,A^{\circ}(x+tw):=	\Big\{x^*\in X^*\, |&\exists\;  \mbox{sequences}\; w_n\rightarrow  v, t_n\downarrow 0\;\mbox{such that}\;\,A^{\circ}(x+t_nw_n)\rightharpoonup x^*\\
	&\quad\mbox{and}\; x+t_nw_n\in D(A) \; \mbox{for all}\; n\in\mathbb{N}
	\Big\}.
	\end{array}
	\end{equation}
	is a nonempty subset of $A(x;v)$.
\end{lemma}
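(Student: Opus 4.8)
The plan is to prove the two assertions of the lemma separately: that the weak outer limit in \eqref{Minimal} is contained in $A(x;v)$, and that it is nonempty. Both rest on Theorem~\ref{Representation1} together with the defining minimality property $\|A^{\circ}(y)\|=\min\{\|y^*\|:y^*\in Ay\}$ of the minimal-norm selection.

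For the inclusion I would argue directly from the definitions. Suppose $x^*$ lies in the set on the left-hand side of \eqref{Minimal}; then there are sequences $w_n\rightarrow v$, $t_n\downarrow 0$ with $x+t_nw_n\in D(A)$ and $A^{\circ}(x+t_nw_n)\rightharpoonup x^*$. Since $A^{\circ}(y)\in Ay$ for every $y\in D(A)$, each $A^{\circ}(x+t_nw_n)$ is an admissible selection in the definition of $w\text{-}\Limsup_{w\rightarrow v,\,t\downarrow 0}A(x+tw)$, so $x^*$ belongs to that weak outer limit; by the second inclusion in \eqref{Inclusion} (equivalently, by Theorem~\ref{Representation1}) this equals $A(x;v)$, giving $x^*\in A(x;v)$.

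For nonemptiness I would start from a point $x^*\in A(x;v)$, which exists by hypothesis, and invoke the strong-outer-limit representation in Theorem~\ref{Representation1}: there are sequences $w_n\rightarrow v$, $t_n\downarrow 0$ and $x_n^*\rightarrow x^*$ (strongly) with $x_n^*\in A(x+t_nw_n)$; in particular $x+t_nw_n\in D(A)$. The decisive observation is the domination $\|A^{\circ}(x+t_nw_n)\|\leq\|x_n^*\|$, which follows from minimality; since $x_n^*\rightarrow x^*$ the norms $\|x_n^*\|$ are bounded, so $\{A^{\circ}(x+t_nw_n)\}$ is a bounded sequence in $X^*$. As $X^*$ is uniformly convex it is reflexive by the Milman--Pettis theorem, hence bounded sequences admit weakly convergent subsequences; extracting one gives $A^{\circ}(x+t_{n_k}w_{n_k})\rightharpoonup y^*$ for some $y^*\in X^*$. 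Along the subsequences $w_{n_k}\rightarrow v$, $t_{n_k}\downarrow 0$ with $x+t_{n_k}w_{n_k}\in D(A)$, so $y^*$ lies in the set \eqref{Minimal}, which is therefore nonempty.

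Most of the work is bookkeeping of the definitions against Theorem~\ref{Representation1}; the only genuine analytic step is the boundedness-plus-weak-compactness argument in the nonemptiness part. I expect the (mild) main obstacle to be exactly the justification that the minimal-norm selection stays bounded along the chosen sequence—this is where the minimality of $A^{\circ}$ is essential, since without it an arbitrary selection could blow up and no weak limit point would survive.
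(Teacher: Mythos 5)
Your proposal is correct and follows essentially the same route as the paper: both reduce the inclusion to Theorem~\ref{Representation1} (via the observation that $A^{\circ}(y)\in Ay$), and both obtain nonemptiness by dominating $\|A^{\circ}(x+t_nw_n)\|$ by $\|x_n^*\|$ for a sequence witnessing a point of $A(x;v)$ and then extracting a weakly convergent subsequence by reflexivity of $X^*$. The only cosmetic difference is that you take the witnessing sequence from the strong outer limit while the paper takes it from the weak one (where boundedness of $\{x_n^*\}$ follows from the uniform boundedness principle); this changes nothing of substance.
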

\begin{proof} 
	By Theorem~\ref{Representation1}, the set in  \eqref{Minimal} is a subset of $A(x;v)$ and
	since $A(x;v)\ne\emptyset$  we have
	$$
	w-\Limsup_{w\rightarrow v, t\downarrow 0}A(x+tw)\ne\emptyset.
	$$ 
	Let $x^*$ belong to this set. Then,
	there exist sequences  $t_n\downarrow 0, w_n\rightarrow v, x_n^*\rightharpoonup x^*$ with $x_n^*\in A(x+t_nw_n)$ for every $n\in\mathbb{N}$.  It follows that $x+t_nw_n\in D(A)$ for every $n\in\mathbb{N}$ and 
	$\{x_n^*\}$ is bounded in $X^*$. Since $\|A^{\circ}(x+t_nw_n)\|\leq \|x_n^*\|$ ,  $\{A^{\circ}(x+t_nw_n)\}$ is also bounded in $X^*$. Since $X^*$ is reflexive, the sequence  $\{A^{\circ}(x+t_nw_n)\}$ has a subsequence converging weakly to some $ \bar{x}\in X^*$ and  
	it belongs to the set in \eqref{Minimal}. 
	$\hfill\Box$
\end{proof}
The next example shows that the following inclusion
$$
w-\underset{\substack{w\rightarrow v, t\downarrow 0\\ x+tw\in D(A)}}{\operatorname{Limsup}}\,A^{\circ}(x+tw)\subset A(x;v)
$$
may be strict. 
\begin{example} Let $X$ be a real Hilbert space and $Ax=N(x;\overline{\mathbb{B}})$. Let $x_0, v_0\in H$ be such that 
	$$
	\|x_0\|=\|v_0\|=1 \quad \text{and} \quad \langle x_0,v_0\rangle=0.
	$$
It is clear that $w-\underset{\substack{w\rightarrow v_0, t\downarrow 0\\ x_0+tw\in D(A)}}{\operatorname{Limsup}}\,A^{\circ}(x_0+tw)=\{0\}$ (since $A^{\circ}x=0$ for all $x\in\overline{\mathbb{B}}$) while 
$$
A(x_0;v_0)=Ax_0=\mathbb{R}_+x_0
$$
(since $\langle v_0, x^*\rangle=0$ for all $x^*\in Ax_0$). 
\end{example}
\begin{theorem}\label{SF_Rep1}
	Suppose that $X$ and $X^*$ are uniformly convex Banach spaces. Let $A:X\rightrightarrows X^*$ be a maximal monotone operator. For every $x\in D(A)$ and $v\in X\setminus\{0\}$, we have
	\begin{equation}\label{Rep}
	\sigma_{Ax}(v)=
	\begin{cases}
	\displaystyle\liminf_{\substack{w\rightarrow v, t\downarrow 0\\ x+tw\in D(A)}}\langle A^{\circ}(x+tw),w\rangle & \text{if}\;  v\in T(x;\overline{D(A)}),\\
	+\infty & \text{otherwise}.
	\end{cases}
	\end{equation}
\end{theorem}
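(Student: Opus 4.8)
The plan is to split the argument according to whether $v$ lies in the tangent cone $T(x;\overline{D(A)})$, and to reduce the nontrivial inclusion to Lemma~\ref{MinimalFace} by a density argument.

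First I would treat the case $v\notin T(x;\overline{D(A)})$ and show $\sigma_{Ax}(v)=+\infty$. By the dual description of the tangent cone recorded after \eqref{NormalTangent}, there is $y^*\in N(x;\overline{D(A)})$ with $\langle y^*,v\rangle>0$. The structural fact I would use is that $N(x;\overline{D(A)})$ sits inside the recession cone of $Ax$: if $x^*\in Ax$ and $t>0$, then for every $(z,z^*)\in G(A)$ one has $\langle (x^*+ty^*)-z^*,x-z\rangle=\langle x^*-z^*,x-z\rangle+t\langle y^*,x-z\rangle\ge 0$, since the first term is nonnegative by monotonicity and the second is nonnegative because $z\in\overline{D(A)}$ and $y^*\in N(x;\overline{D(A)})$; maximality of $A$ then forces $x^*+ty^*\in Ax$. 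Hence $\sigma_{Ax}(v)\ge\langle x^*,v\rangle+t\langle y^*,v\rangle\to+\infty$ as $t\to+\infty$, settling this case. The same density of $D(A)$ in $\overline{D(A)}$ also shows that the index set of the liminf in \eqref{Rep} is nonempty when $v\in T(x;\overline{D(A)})$.

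Now assume $v\in T(x;\overline{D(A)})$ and write $L$ for the liminf on the right of \eqref{Rep}. The inequality $L\ge\sigma_{Ax}(v)$ is the routine half: for any fixed $x^*\in Ax$ and any admissible pair $(t,w)$ with $x+tw\in D(A)$, monotonicity of $A$ between $(x,x^*)$ and $(x+tw,A^{\circ}(x+tw))$ gives $t\langle A^{\circ}(x+tw)-x^*,w\rangle\ge 0$, hence $\langle A^{\circ}(x+tw),w\rangle\ge\langle x^*,w\rangle$; letting $w\to v$ and taking the supremum over $x^*\in Ax$ yields $L\ge\sigma_{Ax}(v)$.

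The substantive half is $L\le\sigma_{Ax}(v)$, and here the main obstacle is that the face $A(x;v)$ may be empty (the support value can be finite yet unattained), so Lemma~\ref{MinimalFace} does not apply directly at $v$. If $\sigma_{Ax}(v)=+\infty$ there is nothing to prove, so suppose $v\in\operatorname{dom}\sigma_{Ax}$. Since $\sigma_{Ax}$ is proper lower semicontinuous convex, Proposition~\ref{Density} produces $v_k\to v$ with $\partial\sigma_{Ax}(v_k)=A(x;v_k)\ne\emptyset$ and $\sigma_{Ax}(v_k)\to\sigma_{Ax}(v)$. For each $k$, Lemma~\ref{MinimalFace} furnishes $x_k^*\in A(x;v_k)$ together with sequences $t\downarrow 0$, $w\to v_k$ (with $x+tw\in D(A)$) along which $A^{\circ}(x+tw)\rightharpoonup x_k^*$; by weak--strong convergence $\langle A^{\circ}(x+tw),w\rangle\to\langle x_k^*,v_k\rangle=\sigma_{Ax}(v_k)$. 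A diagonal selection then yields admissible pairs $(t_k,w_k)$ with $t_k\downarrow 0$, $w_k\to v$, $x+t_kw_k\in D(A)$, and $\langle A^{\circ}(x+t_kw_k),w_k\rangle\to\sigma_{Ax}(v)$, whence $L\le\sigma_{Ax}(v)$. Combining the two inequalities proves \eqref{Rep}.
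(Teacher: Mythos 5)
Your proposal is correct and follows essentially the same route as the paper's proof: the same case split on $v\in T(x;\overline{D(A)})$, the same recession-cone argument via $Ax+N(x;\overline{D(A)})=Ax$ for the infinite case, the monotonicity inequality for the easy half, and the combination of Proposition~\ref{Density} with Lemma~\ref{MinimalFace} plus a diagonal extraction for the substantive half. No gaps.
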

\begin{proof} If $v\notin T(x;\overline{D(A)})$ then there exists $x^*\in N(x;\overline{D(A)})$ such that $\langle x^*,v\rangle>0$. Then, for every $y^*\in Ax$ and $t>0$, we have
	$	y^*+tx^*\in Ax+ N(x;\overline{D(A)})=Ax$ by the maximal monotonicity of $A$. It follows that $\sigma_{Ax}(v)\geq \langle y^*,v\rangle+t\langle x^*,v\rangle$. Taking $t\rightarrow+\infty$ in the latter inequality, we get $\sigma_{Ax}(v)=+\infty$. 
	
	Suppose now that $v\in T(x;\overline{D(A)})$. It follows from \eqref{TB} that there exist sequences $t_n\downarrow 0, w_n\rightarrow v$ with $x+t_nw_n\in D(A)$ for all $n\in\mathbb{N}$.
	Let $(t_n)$ and $(w_n)$ be any such sequences. Then, we have
	\begin{equation}\label{I1}
	\sigma_{Ax}(w_n)\leq \langle A^{\circ}(x+t_nw_n), w_n\rangle. 
	\end{equation}
	Indeed, by the monotonicity of $A$, for every $x^*\in Ax$, we have 
	$$
	\langle A^{\circ}(x+t_nw_n)-x^*, w\rangle=t^{-1}\langle A^{\circ}(x+t_nw_n)-x^*, x+t_nw_n-x\rangle\geq 0.
	$$
	Hence, $\langle A^{\circ}(x+t_nw_n), w_n\rangle\geq \langle x^*, w_n\rangle$ for every $x^*\in Ax$ and so \eqref{I1} holds. Taking $n\rightarrow\infty$ in \eqref{I1}, by the lower semicontinuity of $\sigma_{Ax}$, we get
	$$
	\sigma_{Ax}(v)\leq \liminf_{n\rightarrow\infty}\langle A^{\circ}(x+t_nw_n),w_n\rangle.
	$$
	Hence, 
	\begin{equation}\label{LHS}
	\sigma_{Ax}(v)\leq \liminf_{\substack{w\rightarrow v, t\downarrow 0\\ x+tw\in D(A)}}\langle A^{\circ}(x+tw),w\rangle. 
	\end{equation}
	Now we establish the reverse inequality when $\sigma_{Ax}(v)<+\infty$. 
	To do this, we only need to point out the existence of the sequences $t_n\downarrow 0, w_n\rightarrow v$ with $x+t_nw_n\in D(A)$ for every $n\in\mathbb{N}$ such that
	\begin{equation}
	\langle A^{\circ}(x+t_nw_n),w_n\rangle\rightarrow\sigma_{Ax}(v).
	\end{equation}
	Applying Proposition~\ref{Density} for the proper lower semicontinuous convex function $\sigma_{Ax}$ and $v\in\operatorname{dom}\sigma_{Ax}$, we can  find a sequence $(v_n)\subset X$  such that $v_n\rightarrow v, \sigma_{Ax}(v_n)\rightarrow\sigma_{Ax}(v)$
	and $A(x;v_n)=\partial\sigma_{Ax}(v_n)\ne\emptyset$. By Lemma~\ref{MinimalFace}, for every $n\in\mathbb{N}$, there exists sequences  $t_m^n\downarrow 0, w_m^n\rightarrow v_n$ as $m\rightarrow\infty$  with $x+t_m^nw_m^n\in D(A)$ for every $m\in\mathbb{N}$ such that
	$\langle A^{\circ}(x+t_m^nw_m^n),w_m^n\rangle\rightarrow\sigma_{Ax}(v_n)$ as $m\rightarrow\infty$.  For every $n\in\mathbb{N}$, choosing $m$ such that 
	$$
	t_m^n\leq\frac{1}{n}, \|w^n_m-v_n\|\leq \frac{1}{n}, \left|\langle A^{\circ}(x+t_m^nw_m^n),w_m^n\rangle-\sigma_{Ax}(v_n)\right|\leq\frac{1}{n}
	$$
	and setting $t_n:=t_m^n, w_n:=w_m^n$. Then, $t_n\downarrow 0, w_n\rightarrow v$ with $x+t_nw_n\in D(A)$ for every $n\in\mathbb{N}$ and 
	$ \langle A^{\circ}(x+t_nw_n),w_n\rangle\rightarrow\sigma_{Ax}(v)$. 
	Hence, we have the equality in \eqref{LHS}.
	$\hfill\Box$
\end{proof}
\begin{remark} {\rm It follows from \eqref{Rep} that $(x,x^*)\in G(A)$ if and only if $x\in D(A)$ and
		the following inequality 
		\begin{equation}\label{Neighbor}
		\langle x^*-A^{\circ}y, x-y\rangle\geq 0
		\end{equation}
		holds for all $y$ in some neighborhood of $x$. Indeed, by Theorem~\ref{SF_Rep1}, if $v\in T(x;\overline{D(A)})$ then  
		\begin{eqnarray*}
			\sigma_{Ax}(v)&=&\liminf_{\substack{w\rightarrow v, t\downarrow 0\\ x+tw\in D(A)}}\langle A^{\circ}(x+tw),w\rangle\\
			&\geq&\liminf_{\substack{w\rightarrow v, t\downarrow 0\\ x+tw\in D(A)}}\langle x^*,w\rangle\\
			&=&\langle x^*,v\rangle.
		\end{eqnarray*}
		Therefore, 		$\sigma_{Ax}(v)\geq\langle x^*,v\rangle$ for all $v\in X$ and so $(x,x^*)\in G(A)$.
	}  
\end{remark}

\medskip
The formula \eqref{Rep} helps us to establish a local  reconstruction of a maximal monotone operator from its minimal-norm selection.  
\begin{corollary} 	Suppose that $X$ and $X^*$ are uniformly convex Banach spaces.  Let $A_1$ and $A_2$ be maximal monotone operators from $X$ to $X^*$. If there exist $x_0\in D(A_1)\cap D(A_2)$ and $r>0$ such that $D(A_1)\cap B(x_0;r)=D(A_2)\cap B(x_0;r)$ and $A_1^{\circ}=A_2^{\circ}$ on $D(A_1)\cap B(x_0;r)$ then $A_1=A_2$ on $D(A_1)\cap B(x_0;r)$. 
	In particular, if $D(A_1)=D(A_2)$ and $A_1^{\circ}=A_2^{\circ}$ then $A_1=A_2$. 
\end{corollary}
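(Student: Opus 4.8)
The plan is to reduce everything to the support-function formula of Theorem~\ref{SF_Rep1} and to exploit that this formula is purely local. Fix an arbitrary $x\in D(A_1)\cap B(x_0;r)=D(A_2)\cap B(x_0;r)$; since a nonempty closed convex set is recovered from its support function, and each $A_ix$ is nonempty closed and convex, it suffices to prove $\sigma_{A_1x}=\sigma_{A_2x}$ on $X$, equivalently (both values at $0$ being $0$) on $X\setminus\{0\}$.

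First I would check that the closures of the two domains also agree inside the ball, namely $\overline{D(A_1)}\cap B(x_0;r)=\overline{D(A_2)}\cap B(x_0;r)$: if $y$ lies in the left-hand side, approximate it by a sequence in $D(A_1)$; since $y$ is interior to the \emph{open} ball, the tail of this sequence lies in $D(A_1)\cap B(x_0;r)=D(A_2)\cap B(x_0;r)\subset D(A_2)$, so $y\in\overline{D(A_2)}$, and symmetrically. Because $x\in B(x_0;r)$ and the tangent cone is described through sequences $t_n\downarrow 0$, $w_n\rightarrow v$ with $x+t_nw_n\in\overline{D(A_i)}$ (see \eqref{TB}), whose terms eventually land in $B(x_0;r)$, this local identity yields $T(x;\overline{D(A_1)})=T(x;\overline{D(A_2)})$. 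Consequently the two cases of formula \eqref{Rep} are triggered for the same directions $v$, and the value $+\infty$ occurs simultaneously for $A_1$ and $A_2$.

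Next, for a common tangent direction $v\in T(x;\overline{D(A_1)})=T(x;\overline{D(A_2)})$, I would compare the two liminf expressions in \eqref{Rep}. Writing each liminf as the limit as $\delta\downarrow 0$ of the infimum of $\langle A_i^{\circ}(x+tw),w\rangle$ over $0<t<\delta$, $\|w-v\|<\delta$, $x+tw\in D(A_i)$, I note that for $\delta$ small enough every admissible point $x+tw$ lies in $B(x_0;r)$ (the perturbation $tw$ is small because $w$ stays bounded while $t\downarrow 0$, and $x$ is interior to the ball). On that range the constraints $x+tw\in D(A_1)$ and $x+tw\in D(A_2)$ coincide, and on the common admissible set $A_1^{\circ}=A_2^{\circ}$ by hypothesis, so the two infima, and hence the two liminf values, are equal. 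Combining with the previous paragraph gives $\sigma_{A_1x}(v)=\sigma_{A_2x}(v)$ for every $v\in X\setminus\{0\}$, whence $A_1x=A_2x$. As $x$ was arbitrary in $D(A_1)\cap B(x_0;r)$, the local conclusion follows; the \emph{in particular} statement is then obtained by applying this at each point of $D(A_1)=D(A_2)$ with, say, $r=1$.

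The main obstacle I anticipate is precisely this localization: making rigorous that both the tangent cone $T(x;\overline{D(A_i)})$ and the constrained liminf in \eqref{Rep}, although written in terms of the \emph{global} domain $D(A_i)$, only feel the domain through an arbitrarily small neighborhood of $x$, so that the hypotheses—stated only on $B(x_0;r)$—suffice to force equality of the support functions. Everything else (nonemptiness, closedness and convexity of $A_ix$, and the recovery of a closed convex set from its support function) is standard and was recorded in Section~2.
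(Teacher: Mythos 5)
Your proof is correct and follows essentially the same route as the paper's: apply Theorem~\ref{SF_Rep1} to conclude $\sigma_{A_1x}=\sigma_{A_2x}$ for each $x\in D(A_1)\cap B(x_0;r)$, then recover the (nonempty, closed, convex) values from their support functions (the paper phrases this last step as $A_ix=\partial\sigma_{A_ix}(0)$). The localization details you supply --- that the tangent cones $T(x;\overline{D(A_i)})$ and the constrained liminf in \eqref{Rep} only see the domain and the minimal-norm selection inside $B(x_0;r)$ --- are precisely what the paper compresses into ``by Theorem~\ref{SF_Rep1} and our assumptions,'' and you verify them correctly.
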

\begin{proof}  
	Let $x_0\in D(A_1)\cap D(A_2)$ and $r>0$ be such that $D(A_1)\cap B(x_0;r)=D(A_2)\cap B(x_0;r)$ and $A_1^{\circ}=A_2^{\circ}$ on $D(A_1)\cap B(x_0;r)$. Let 
	$x\in D(A_1)\cap B(x_0;r)$. By Theorem~\ref{SF_Rep1} and our assumptions, we obtain $\sigma_{A_1x}=\sigma_{A_2x}$.  Hence, we have
	\begin{eqnarray*}
		A_1x=A_1(x;0)&=&\partial\sigma_{A_1x}(0)\\
		&=&\partial\sigma_{A_2x}(0)=A_2(x;0)=A_2x.
	\end{eqnarray*}
	$\hfill\Box$
\end{proof}

\medskip
The next corollary presents a local decomposition of maximal monotone operator provided that its minimal-norm selection is locally bounded.  As a consequence, if the minimal-norm selection of a maximal monotone operator is bounded with some modulus around some interior point of the domain then the whole values of the maximal monotone operator are also bounded with the same modulus around that point.  
\begin{corollary} Let $A$ be a maximal monotone operator from $X$ to $X^*$ and $x\in D(A)$. Suppose that there exist $r>0$ and $\rho>0$  such that
	\begin{equation}\label{eq2}
	\|A^{\circ}y\|\leq \rho, \quad \forall y\in B(x;r)\cap D(A).
	\end{equation}
	Then, for every $y\in B(x;r)\cap D(A)$, we have
	\begin{equation}\label{eq4}
	Ay\subset N(y;\overline{D(A)})+\rho\overline{\mathbb{B}^*}. 
	\end{equation}
	In particular, if $B(x;r)\subset D(A)$ then $Ay\subset \rho \overline{\mathbb{B}^*}$ for every $y\in B(x;r)$.
\end{corollary}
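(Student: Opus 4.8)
The plan is to derive the inclusion \eqref{eq4} by comparing support functions. Since $Ay$ is closed and convex (being a value of a maximal monotone operator) and the right-hand side $C:=N(y;\overline{D(A)})+\rho\overline{\mathbb{B}^*}$ is also closed and convex, it suffices to verify $\sigma_{Ay}(v)\le\sigma_C(v)$ for every $v\in X$. For the closedness of $C$, note that $N(y;\overline{D(A)})$ is a weakly closed convex cone while $\rho\overline{\mathbb{B}^*}$ is weakly compact by reflexivity of $X$; their Minkowski sum is therefore weakly closed, hence norm closed. Fix $y\in B(x;r)\cap D(A)$ throughout.

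First I would compute $\sigma_C$. By additivity of the support function on a Minkowski sum, $\sigma_C=\sigma_{N(y;\overline{D(A)})}+\sigma_{\rho\overline{\mathbb{B}^*}}$. Since $N(y;\overline{D(A)})$ is a cone polar to $T(y;\overline{D(A)})$ (the bipolar relations recalled in Section~2), its support function is the indicator of the tangent cone: $\sigma_{N(y;\overline{D(A)})}(v)=0$ when $v\in T(y;\overline{D(A)})$ and $+\infty$ otherwise. As $\sigma_{\rho\overline{\mathbb{B}^*}}(v)=\rho\|v\|$, we obtain $\sigma_C(v)=\rho\|v\|$ for $v\in T(y;\overline{D(A)})$ and $\sigma_C(v)=+\infty$ otherwise.

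Next I would compare with $\sigma_{Ay}$ through Theorem~\ref{SF_Rep1}. When $v\notin T(y;\overline{D(A)})$ the inequality is trivial. When $v\in T(y;\overline{D(A)})$, formula \eqref{Rep} gives $\sigma_{Ay}(v)=\liminf\langle A^{\circ}(y+tw),w\rangle$ over $w\to v$, $t\downarrow 0$, $y+tw\in D(A)$. The crucial observation is that, because $B(x;r)$ is open and contains $y$, for $t$ small and $w$ near $v$ the point $y+tw$ lies in $B(x;r)\cap D(A)$, so hypothesis \eqref{eq2} yields $\|A^{\circ}(y+tw)\|\le\rho$ and hence $\langle A^{\circ}(y+tw),w\rangle\le\rho\|w\|$ for all admissible $(t,w)$ in a sufficiently small neighborhood. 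Passing to the liminf and using $\|w\|\to\|v\|$ gives $\sigma_{Ay}(v)\le\rho\|v\|=\sigma_C(v)$. This establishes $\sigma_{Ay}\le\sigma_C$ and therefore \eqref{eq4}. The step requiring the most care is precisely this localization, namely checking that $y+tw\in B(x;r)\cap D(A)$ for all $(t,w)$ entering the liminf, so that the bound \eqref{eq2} may legitimately be inserted.

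Finally, for the last assertion, if $B(x;r)\subset D(A)$ then every $y\in B(x;r)$ belongs to $\operatorname{int}D(A)=\operatorname{int}\overline{D(A)}$, whence $N(y;\overline{D(A)})=\{0_{X^*}\}$ and \eqref{eq4} reduces to $Ay\subset\rho\overline{\mathbb{B}^*}$.
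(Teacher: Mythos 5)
Your proof is correct and follows essentially the same route as the paper: the heart of both arguments is to feed the local bound \eqref{eq2} into the representation formula \eqref{Rep} of Theorem~\ref{SF_Rep1} to get $\sigma_{Ay}(v)\leq\rho\|v\|$ on $T(y;\overline{D(A)})$ (and $+\infty$ off it), with the same localization remark that $y+tw\in B(x;r)$ for all $(t,w)$ eventually entering the liminf. The only difference is cosmetic: the paper deduces membership pointwise from $\langle y^*,z-y\rangle\leq\rho\|z-y\|$ for $z\in\overline{D(A)}$, whereas you compare support functions of the two closed convex sets; your explicit verification that $N(y;\overline{D(A)})+\rho\overline{\mathbb{B}^*}$ is closed (weakly closed cone plus weakly compact ball) supplies a detail the paper leaves implicit in its final step.
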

\begin{proof} Let $y\in B(x;r)\cap D(A)$ and $y^*\in Ay$.  We first show that 
	\begin{equation}\label{eq3}
	\langle y^*, z-y\rangle\leq\rho\|z-y\|, \quad \forall z\in \overline{D(A)}.
	\end{equation}
	Indeed, for every $z\in \overline{D(A)}\setminus\{y\}$, $z-y\in T(y;\overline{D(A)})\setminus\{0\}$, and by Theorem~\ref{SF_Rep1} and \eqref{eq2}
	\begin{eqnarray*}
		\langle y^*, z-y\rangle&\leq&\sigma_{Ay}(z-y)\\
		&=&\liminf_{\substack{w\rightarrow z-y, t\downarrow 0\\ y+tw\in D(A)}}\langle A^{\circ}(y+tw),w\rangle\\
		&\leq&\liminf_{\substack{w\rightarrow z-y, t\downarrow 0\\ y+tw\in D(A)}}\|A^{\circ}(y+tw)\|\|w\| \\
		&\leq&\rho\|z-y\|. 
	\end{eqnarray*}
	Hence, \eqref{eq3}  holds and so $y^*\in  N(y;\overline{D(A)})+\rho\overline{\mathbb{B}^*}$. Therefore, \eqref{eq4} is satisfied. 
	
	If $B(x;r)\subset D(A)$ then $N(y;\overline{D(A)})=\{0\}$ for every $y\in B(x;r)$. By \eqref{eq4} we have $Ay\subset\rho\overline{\mathbb{B}^*}$ for all $y$ in this set.
	$\hfill\Box$
\end{proof}
\section{Representation Formulas in Reflexive Spaces}
In this section, we will work with maximal monotone operators having their domains with nonempty interiors in reflexive Banach spaces. Under these assumptions, we could refine the formulas obtained in the previous section. First, we show that the faces  can be represented via the limit values of any selection of maximal monotone operators. 
\begin{lemma}\label{int}
	Let $X$ be a reflexive real Banach space and $A:X\rightrightarrows X^*$ a maximal monotone operator such that $\operatorname{int}(D(A))\ne\emptyset$. Let $D$ be dense subset of $D(A)$ and $\tilde{A}$ be a selection of $A$. For every $x\in D(A), v\in\operatorname{int}(D(A)-x)$, the following set
	\begin{equation}
	\begin{array}{rl}
	w-\underset{\substack{w\rightarrow v, t\downarrow 0\\ x+tw\in D}}{\operatorname{Limsup}}\,\tilde{A}(x+tw):=	\Big\{x^*\in X^*\, |&\exists\;  \mbox{sequences}\; w_n\rightarrow  v, t_n\downarrow 0\;\mbox{such that}\;\,\tilde{A}(x+t_nw_n)\rightharpoonup x^*\\
	&\quad\mbox{and}\; x+t_nw_n\in D \; \mbox{for all}\; n\in\mathbb{N}
	\Big\}.
	\end{array}	
	\end{equation}
	is a nonempty subset of $A(x;v)$. 
\end{lemma}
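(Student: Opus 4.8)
The plan is to establish both claims by exploiting the interior‑point hypothesis to secure local boundedness, and the density of $D$ to realize an admissible sequence inside $D$. Since $v\in\operatorname{int}(D(A)-x)$ we have $x+v\in\operatorname{int}D(A)$, and because $\overline{D(A)}$ is convex with $\operatorname{int}D(A)=\operatorname{int}\overline{D(A)}$, the line‑segment principle gives $x+tv\in\operatorname{int}D(A)$ for every $t\in(0,1]$. I would fix $\rho>0$ and $M>0$ with $B(x+v;\rho)\subset\operatorname{int}D(A)$ and $\|u^*\|\le M$ for all $u^*\in Au$, $u\in B(x+v;\rho)$, which is possible by the local boundedness of $A$ at the interior point $x+v$. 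Taking $t_n\downarrow 0$ and using that $D$ is dense in $D(A)$, I choose $z_n\in D$ with $\|z_n-(x+t_nv)\|\le t_n/n$ and set $w_n:=t_n^{-1}(z_n-x)$; then $w_n\to v$, $t_n\downarrow 0$ and $x+t_nw_n=z_n\in D$, so $(t_n),(w_n)$ is admissible for the $\operatorname{Limsup}$ set. Write $z_n^*:=\tilde A(z_n)\in A(z_n)$.

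The main obstacle is to bound $\{z_n^*\}$, since $z_n\to x$ and $x$ may lie on the boundary of $D(A)$, where $A$ fails to be locally bounded; hence I cannot invoke local boundedness directly and must pass through two reference points. Comparing $z_n$ with the interior points $x+v+\rho b$ for $b\in\ball$ (whose images have norm at most $M$) by monotonicity and taking the infimum over $b$, one obtains an estimate of the shape $\rho\|z_n^*\|\le -\langle z_n^*,v\rangle+t_n\langle z_n^*,w_n\rangle+C_n$ with $C_n$ bounded; since $t_n\|w_n\|\to 0$, the term $t_n\langle z_n^*,w_n\rangle$ is absorbed into $\tfrac{\rho}{2}\|z_n^*\|$ for large $n$. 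It then remains to bound $-\langle z_n^*,v\rangle$ from above, for which I compare $z_n$ with the fixed point $x$: for any $x^\dagger\in Ax$, monotonicity gives $\langle z_n^*,w_n\rangle\ge\langle x^\dagger,w_n\rangle$, whence $-\langle z_n^*,v\rangle\le -\langle x^\dagger,w_n\rangle+\|z_n^*\|\,\|w_n-v\|$; as $\|w_n-v\|\to 0$, the last term is absorbed into $\tfrac{\rho}{4}\|z_n^*\|$ and one concludes that $\|z_n^*\|$ stays bounded. This two‑step absorption is the delicate part of the argument. With boundedness in hand, reflexivity of $X^*$ yields a subsequence with $z_n^*\rightharpoonup x^*$, and by construction $x^*$ lies in the $\operatorname{Limsup}$ set, proving nonemptiness.

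Finally I would prove the inclusion in $A(x;v)$. Let $x^*$ be any element of the set, realized by $w_n\to v$, $t_n\downarrow 0$ with $z_n:=x+t_nw_n\in D$ and $z_n^*:=\tilde A(z_n)\rightharpoonup x^*$. Then $z_n\to x$ strongly, so demiclosedness of $A$ (in the form $z_n\to x$, $z_n^*\rightharpoonup x^*$, $z_n^*\in Az_n\Rightarrow x^*\in Ax$) gives $x^*\in Ax$. For any $y^*\in Ax$, monotonicity between $z_n$ and $x$ gives $\langle z_n^*-y^*,t_nw_n\rangle\ge 0$, hence $\langle z_n^*,w_n\rangle\ge\langle y^*,w_n\rangle$; passing to the limit, using that $\{z_n^*\}$ is bounded and $w_n\to v$ strongly so that $\langle z_n^*,w_n\rangle\to\langle x^*,v\rangle$, I obtain $\langle x^*,v\rangle\ge\langle y^*,v\rangle$ for every $y^*\in Ax$. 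Together with $x^*\in Ax$ this yields $\langle x^*,v\rangle=\sigma_{Ax}(v)$, that is, $x^*\in A(x;v)$, which completes the proof.
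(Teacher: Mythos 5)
Your proof is correct and follows essentially the same route as the paper: use the interior-point hypothesis to get local boundedness of $A$ near $x+v$, propagate that bound along the segment toward $x$ to control $\tilde A$ on the approximating points chosen in $D$ by density, extract a weak cluster point by reflexivity, and obtain the inclusion in $A(x;v)$ from monotonicity and demiclosedness. The only difference is that you prove by hand, via the two-step absorption estimate, the uniform bound on $Az$ for $z$ in the cone from $x$ to a ball around $x+v$, which the paper simply cites from Borwein--Yao (Lemma 4.1 of \cite{BY13}), and likewise you verify the inclusion into $A(x;v)$ directly rather than invoking the earlier remark \eqref{Inclusion}; both of your arguments are sound.
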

\begin{proof}  	Let  $x\in D(A)$ and $v\in\operatorname{int}(D(A)-x)$.
	From \eqref{Inclusion}, we have 
	$$ \omega-\underset{w \to v, t \downarrow 0\atop x+tw \in D}{\operatorname{Limsup}}\,\tilde{A}(x+tw) \subset \omega-\underset{w \to v, t \downarrow 0}{\operatorname{Limsup}}\,A(x+tw)\subset A(x;v).$$
	Since $x +v \in \operatorname{int}(D(A))$,  $A$ is locally bounded around $x+v$ , i.e., there exist $r, M> 0$ such that $Ay\ne\emptyset$ and 
	$Ay \subset M\mathbb{B} \, \text{ for all } y \in x+v+4r\mathbb{B}$.  
	According to \cite[Lemma 4.1]{BY13}, there exists $K>0$ such that
	\begin{equation}\label{m.3}
	\emptyset\ne Az \subset K\mathbb{B}, \,\, \forall z \in (x, x+v+2r\mathbb{B}],
	\end{equation}
	where 
	$$
	(x, x+v+2r\mathbb{B}]:=\{\lambda x+ (1-\lambda)z: \lambda\in  ]0,1], z\in x+v+2r\mathbb{B}\}.
	$$
	Picking any sequences $\{t_n\}\subset\mathbb{R}_+$ and $\{w_n\} \subset X$ converging to $0$ and $v$, respectively, and satisfying $x+t_nw_n \in (x, x+v+r\mathbb{B}]$ for all $n \in \mathbb{N}$. 
	Since $D$ is dense on $D(A)$, for each $n \in \mathbb{N}$, we can find $\nu_n$ such that $ x+t_n(w_n+\nu_n) \in D$ and
	\begin{equation}\label{eq}
	\Vert \nu_n\Vert \le \frac{1}{n},\quad w_n+\nu_n \in v + 2r\mathbb{B}.
	\end{equation}
	Combining  \eqref{m.3}  and  \eqref{eq}, we arrive at $w_n+\nu_n\rightarrow v$ as $n\rightarrow+\infty$ and 
	$$ 
	\tilde{A}(x+t_n(w_n+\nu_n))\in  A(x+t_n(w_n+\nu_n)) \subset  K\mathbb{B}, \quad \forall n \in \mathbb{N}. 
	$$
	Since $X^*$ is reflexive, without loss of generality, we can assume that $\tilde{A}(x+t_n(w_n+\nu_n))\rightharpoonup \xi$ as $n \to +\infty$ and so $\xi\in w-\underset{\substack{w\rightarrow v, t\downarrow 0\\ x+tw\in D}}{\operatorname{Limsup}}\,\tilde{A}(x+tw)$.
	$\hfill\Box$
\end{proof}

Second, we use Lemma~\ref{int} to improve the representation formula \eqref{Rep} in Theorem~\ref{SF_Rep1}. 
\begin{theorem}\label{REP2} Let $X$ be a reflexive real Banach space and $A:X\rightrightarrows X^*$ a maximal monotone operator such that $\operatorname{int}(D(A))\ne\emptyset$. Let $D$ be dense subset of $D(A)$ and $\tilde{A}$ be a selection of $A$. For every $x\in D(A)$ and $v\in X\setminus\{0\}$ 
	\begin{equation}
	\sigma_{Ax}(v)=
	\begin{cases}
	\langle \xi,v\rangle & \text{if}\quad  v\in\operatorname{int}\left(T(x;\overline{D(A)})\right),\\
	\displaystyle\liminf_{\substack{w\rightarrow v, t\downarrow 0 \\ x+tw\in D}}\langle \tilde{A}(x+tw),w\rangle & \text{if}\quad  v\in\operatorname{bd}\left(T(x;\overline{D(A)})\right),\\
	+\infty & \text{otherwise},
	\end{cases}
	\end{equation}
	where $\xi$ is any vector in the set $w-\underset{\substack{w\rightarrow v, t\downarrow 0\\ x+tw\in D}}{\operatorname{Limsup}}\,\tilde{A}(x+tw)$. 
\end{theorem}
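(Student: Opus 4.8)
The plan is to decompose $X\setminus\{0\}$ according to the position of $v$ relative to the closed convex cone $T:=T(x;\overline{D(A)})$; since $\operatorname{int}(D(A))\ne\emptyset$ this cone has nonempty interior, and the three branches of the asserted formula are exactly $v\in\operatorname{int}T$, $v\in\operatorname{bd}T$, and $v\notin T$. Two elementary facts will be used repeatedly. First, the standard convex-analysis identity $\operatorname{int}T=\bigcup_{\lambda>0}\lambda\left(\operatorname{int}(D(A))-x\right)$, so that $v\in\operatorname{int}T$ forces $x+\tau v\in\operatorname{int}(D(A))$ for some $\tau>0$ and hence $(x,x+\tau v]\subset\operatorname{int}(D(A))$. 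Second, $\sigma_{Ax}$ is proper lower semicontinuous convex, so fixing $z_0\in\operatorname{int}T$ and writing $v_s:=(1-s)v+sz_0$ we have $v_s\to v$ and $\sigma_{Ax}(v_s)\to\sigma_{Ax}(v)$ as $s\downarrow 0$ whenever $\sigma_{Ax}(v)<+\infty$ (the upper estimate is convexity together with the finiteness of $\sigma_{Ax}$ on $\operatorname{int}T$ established below, the lower estimate is lower semicontinuity).

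The branch $v\notin T$ is handled exactly as the first part of Theorem~\ref{SF_Rep1}: choosing $x^*\in N(x;\overline{D(A)})$ with $\langle x^*,v\rangle>0$ and using $Ax+N(x;\overline{D(A)})=Ax$ gives $\sigma_{Ax}(v)=+\infty$; no uniform convexity enters here. For the branch $v\in\operatorname{int}T$, I would rerun the proof of Lemma~\ref{int} with the interior point $x+\tau v$ in place of $x+v$. The point is that local boundedness of $A$ around $x+\tau v$ propagates, as in Lemma~\ref{int}, to a uniform bound on $A$ over the truncated cone $(x,x+\tau v+2r\mathbb{B}]$, which contains $x+t_nw_n$ for all large $n$ whenever $t_n\downarrow 0$ and $w_n\to v$; density of $D$ nudges these points into $D$ and reflexivity extracts a weak limit. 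Hence the set $w\text{-}\Limsup_{w\to v,\,t\downarrow 0,\,x+tw\in D}\tilde A(x+tw)$ is nonempty, and by \eqref{Inclusion} applied to the selection $\tilde A\subset A$ it is contained in $A(x;v)$. Any $\xi$ in it therefore lies in $A(x;v)$, so by the very definition of the face $\langle\xi,v\rangle=\sigma_{Ax}(v)$; in particular $\sigma_{Ax}$ is finite on $\operatorname{int}T$.

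For $v\in\operatorname{bd}T$ the two inequalities are argued separately. The inequality $\sigma_{Ax}(v)\le\liminf_{w\to v,\,t\downarrow 0,\,x+tw\in D}\langle\tilde A(x+tw),w\rangle$ uses only monotonicity: for $x^*\in Ax$ and $x+tw\in D$ one has $\langle\tilde A(x+tw)-x^*,tw\rangle\ge 0$, whence $\langle\tilde A(x+tw),w\rangle\ge\langle x^*,w\rangle\to\langle x^*,v\rangle$, and taking the supremum over $x^*\in Ax$ gives the bound; this also disposes of the case $\sigma_{Ax}(v)=+\infty$. When $\sigma_{Ax}(v)<+\infty$ I establish the reverse inequality by approximating from the interior: with $v_k:=(1-s_k)v+s_kz_0\in\operatorname{int}T$, $s_k\downarrow 0$, the interior branch supplies $\xi_k\in A(x;v_k)$ realized as a weak limit $\tilde A(x+t^k_mw^k_m)\rightharpoonup\xi_k$ with $w^k_m\to v_k$, $t^k_m\downarrow 0$, $x+t^k_mw^k_m\in D$, and with $\langle\xi_k,v_k\rangle=\sigma_{Ax}(v_k)$. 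By weak--strong convergence $\langle\tilde A(x+t^k_mw^k_m),w^k_m\rangle\to\langle\xi_k,v_k\rangle$ as $m\to\infty$, so a diagonal choice $t_k:=t^k_{m(k)}$, $w_k:=w^k_{m(k)}$ with $m(k)$ large yields $t_k\downarrow 0$, $w_k\to v$, $x+t_kw_k\in D$ and $\langle\tilde A(x+t_kw_k),w_k\rangle\to\lim_k\sigma_{Ax}(v_k)=\sigma_{Ax}(v)$, which gives $\liminf\le\sigma_{Ax}(v)$ and hence equality.

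I expect the boundary case to be the crux. Because $\sigma_{Ax}$ may genuinely fail to be continuous at a boundary direction, no realizing sequence can be read off directly; it must be manufactured by perturbing $v$ into $\operatorname{int}T$, quoting the interior branch at each $v_k$, and then diagonalizing while simultaneously controlling $t^k_m$, the distance $\|w^k_m-v\|$, and the scalar $\langle\tilde A(x+t^k_mw^k_m),w^k_m\rangle$. The two ingredients that must be pinned down carefully are the passage $v\in\operatorname{int}T\Rightarrow x+\tau v\in\operatorname{int}(D(A))$, which is what allows the local-boundedness argument of Lemma~\ref{int} to be quoted for the direction $v$ itself rather than a rescaling, and the segment-continuity $\sigma_{Ax}(v_s)\to\sigma_{Ax}(v)$ feeding the diagonal construction.
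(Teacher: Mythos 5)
Your proposal is correct and follows essentially the same route as the paper: the same three-way decomposition by the position of $v$ relative to $T(x;\overline{D(A)})$, the same use of the interior-of-tangent-cone formula together with Lemma~\ref{int} to produce a face element $\xi$ in the interior case, the same monotonicity argument for the lower estimate on the boundary, and the same interior-approximation $v_n\to v$ with $\sigma_{Ax}(v_n)\to\sigma_{Ax}(v)$ followed by a diagonal extraction for the reverse inequality. The only cosmetic difference is that the paper realizes the interior case by rescaling $v$ to $hv\in\operatorname{int}(D(A)-x)$ and quoting Lemma~\ref{int} verbatim, whereas you rerun the lemma's proof at $x+\tau v$; these are interchangeable.
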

\begin{proof} Since $D(A)$ has nonempty interior, $\overline{D(A)}$ is convex and $\text{int}(D(A))=\text{int}(\overline{D(A)})$ (see \cite[Theorem~27.1 and Theorem 27.3]{Simons08}). Let $x\in D(A)$ and $v\in X\setminus\{0\}$.  We consider three cases of $v$. 
	
	\noindent\medskip	
	\textit{Case 1.} $\quad v\notin T(x;\overline{D(A)})$ 
	
	Repeating the first part of the proof of Theorem~\ref{SF_Rep1} we get $\sigma_{Ax}(v)=+\infty$.
	
	\noindent\medskip	
	\textit{Case 2.} $\quad v\in\operatorname{int}\left(T(x;\overline{D(A)})\right)$
	
	By \cite[Proposition~4.2.3]{AubinFrankowska09},
	the interior of the tangent cone can be expressed as  
	$$
	\operatorname{int}\left(T(x;\overline{D(A)})\right)=\bigcup_{h>0}\left(\frac{\text{\rm int}(\overline{D(A)})-x}{h}\right)=\bigcup_{h>0}\left(\frac{\text{\rm int}(D(A))-x}{h}\right)
	=\bigcup_{h>0}\frac{\operatorname{int}(D(A)-x)}{h}. 
	$$
	Hence, there exists $h>0$ such that $hv\in \operatorname{int}(D(A)-x)$. Applying Lemma~\ref{int}, the following sets are nonempty and 
	$$
	w-\underset{\substack{w\rightarrow v, t\downarrow 0\\ x+tw\in D}}{\operatorname{Limsup}}\,\tilde{A}(x+tw)=w-\underset{\substack{w\rightarrow hv, t\downarrow 0\\ x+tw\in D}}{\operatorname{Limsup}}\,\tilde{A}(x+tw)\subset A(x;hv)=A(x;v).
	$$
	Then, for every $\xi\in w-\underset{\substack{w\rightarrow v, t\downarrow 0\\ x+tw\in D}}{\operatorname{Limsup}}\,\tilde{A}(x+tw)$, we have $\sigma_{Ax}(v)=\langle\xi,v\rangle$. 
	
	\noindent\medskip
	\textit{Case 3.} $\quad v\in\operatorname{bd}\left(T(x;\overline{D(A)})\right)$
	
	\medskip
	Since $A$ is monotone, for every $w\in X$ and $t>0$ such that $x+tw\in D\subset D(A)$, we have
	$$
	\langle x^*, w\rangle\leq \langle \tilde{A}(x+tw),w\rangle, \quad \forall x^*\in Ax.
	$$
	This yields that
	\begin{equation}\label{lefteq}
	\sigma_{Ax}(v) \le \underset{ w \to v, t \downarrow 0 \atop x +tw \in D}{\liminf} \langle \tilde{A}(x+tw),w \rangle. 
	\end{equation}
	Picking $v_0 \in \operatorname{int}\left(T(x;\overline{D(A)})\right)$.  From Case 2., we have $\sigma_{Ax}(v_0)<+\infty$. 
	Consider the sequence $\{v_n\}$ given by
	$$
	v_n:= \frac{1}{n}v_0+ \frac{n-1}{n}v, \quad \forall  n \in \mathbb{N}.
	$$
	On one hand, since $\sigma_{Ax}$ is convex, we have
	$$
	\sigma_{Ax}(v_n)\leq \frac{1}{n}\sigma_{Ax}(v_0)+\frac{n-1}{n}\sigma_{Ax}(v)
	$$
	Taking the superior limit both sides of the above inequality we get
	$$
	\limsup_{n\rightarrow\infty}\sigma_{Ax}(v_n)\leq \sigma_{Ax}(v). 
	$$
	Then, the lower semicontinuity of $\sigma_{Ax}$ implies that
	\begin{equation}\label{limitsigma}
	\underset{n \to \infty}{\lim}\,\sigma_{Ax}(v_n)= \sigma_{Ax}(v).
	\end{equation}
	On the other hand, the convexity of  $T(x;\overline{D(A)})$ implies that $v_n\in \operatorname{int}\left(T(x;\overline{D(A)})\right)$ for all $n\in\mathbb{N}$. As in the proof of Case 2., for every $n \in \mathbb{N}$
	$$
	\emptyset \ne w-\underset{\substack{w\rightarrow v_n, t\downarrow 0\\ x+tw\in D}}{\operatorname{Limsup}}\,\tilde{A}(x+tw) \subset A(x; v_n).
	$$
	Then, there exist sequences $\{t_n\}\subset\mathbb{R}_+, \{w_n\}\subset X$ such that
	$$ 
	t_n \le \frac{1}{n},\quad \Vert w_n -v_n\Vert \le \frac{1}{n},\quad  x+t_nw_n \in D
	$$
	and 
	$$
	\vert \sigma_{Ax}(v_n)- \langle \tilde{A}(x+t_nw_n), w_n \rangle \vert\le \frac{1}{n}.
	$$  
	for every $n\in\mathbb{N}$ . 
	Clearly, $t_n\downarrow 0, w_n\rightarrow v$ and by \eqref{limitsigma} 
	$$
	\lim_{n\rightarrow\infty} \langle \tilde{A}(x+t_nw_n), w_n \rangle=\sigma_{Ax}(v).
	$$
	Combining this and \eqref{lefteq} we get 
	$$
	\sigma_{Ax}(v)=\underset{ w \to v, t \downarrow 0 \atop x +tw \in D}{\liminf} \langle \tilde{A}(x+tw),w \rangle. 
	$$
	$\hfill\Box$
\end{proof}

Finally, we employ Theorem~\ref{REP2} and  \cite[Proposition 5.1]{BY13} to get the global decompositions for maximal monotone operators.  Our proof follows the technique of \cite[Theorem~5.2]{BY13}. 
\begin{corollary} Let $X$ be a reflexive real Banach space and $A:X\rightrightarrows X^*$ a maximal monotone operator such that $\operatorname{int}(D(A))\ne\emptyset$. Let $D$ be dense subset of $D(A)$ and $\tilde{A}$ be a selection of $A$.  Then, for every $x \in X$, 
	\begin{align}
	\label{m.4}
	Ax & = \overline{\operatorname{co}}\left\{\bigcup_{v \in \operatorname{int}(D(A))- x}w-\underset{w \to v, t \downarrow 0\atop x+tw \in D}{\operatorname{Limsup}}\,\tilde{A}(x+tw)\right\} +\mathrm{N}(x; \overline{D(A)}) \\
	\label{m.5}
	&= \overline{\operatorname{co}}\left\{w-\underset{y \overset{D}{\to} x}{\operatorname{Limsup}}\,\tilde{A}y\right\} +\mathrm{N}(x; \overline{D(A)}).
	\end{align}
\end{corollary}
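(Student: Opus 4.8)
The plan is to establish \eqref{m.4} first and then deduce \eqref{m.5} from it. Throughout write $T:=T(x;\overline{D(A)})$, $N:=N(x;\overline{D(A)})$, and for $v\in\operatorname{int}(D(A))-x$ let
$W_v:=w-\underset{\substack{w\rightarrow v,\,t\downarrow 0\\ x+tw\in D}}{\operatorname{Limsup}}\,\tilde A(x+tw)$,
so that with $C:=\overline{\operatorname{co}}\big(\bigcup_{v}W_v\big)$ the formula \eqref{m.4} reads $Ax=C+N$. By the demiclosedness of $A$ (the strong-to-weak form stated in Sect.~2) every weak limit defining $W_v$ lies in $Ax$, so $W_v\subset Ax$ for \emph{every} $x$; in particular, if $x\notin D(A)$ then $Ax=\emptyset$ forces $W_v=\emptyset$, hence $C=\emptyset$ and $C+N=\emptyset=Ax$. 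Thus I may assume $x\in D(A)$. Both $Ax$ and $C+N$ are then closed convex subsets of $X^*$, so it suffices to prove that they share the same support function $\sigma_{(\cdot)}(u)$, $u\in X$. Here I use $\sigma_{C+N}=\sigma_C+\sigma_N$ together with the polarity between $T$ and $N$ recorded in Sect.~2, which yields $\sigma_N(u)=0$ for $u\in T$ and $\sigma_N(u)=+\infty$ otherwise.

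The inclusion $C+N\subset Ax$ is the easy half: since $W_v\subset Ax$ and $Ax$ is closed convex we get $C\subset Ax$, and because maximal monotonicity gives $Ax+N=Ax$ (the identity already exploited in the proof of Theorem~\ref{SF_Rep1}) we obtain $C+N\subset Ax+N=Ax$, i.e.\ $\sigma_{C+N}\le\sigma_{Ax}$. For the reverse inequality I compare $\sigma_{Ax}(u)$ with $\sigma_C(u)+\sigma_N(u)$ directionwise via Theorem~\ref{REP2}. If $u\notin T$ then $\sigma_N(u)=+\infty$ and there is nothing to prove. If $u\in\operatorname{int}(T)$, then by the description of $\operatorname{int}(T)$ recalled in the proof of Theorem~\ref{REP2} there is $h>0$ with $hu\in\operatorname{int}(D(A))-x$, so that $W_u=W_{hu}$ is nonempty (Lemma~\ref{int}) and is indexed in the union; picking any $\xi\in W_u\subset C$, Theorem~\ref{REP2} gives $\sigma_{Ax}(u)=\langle\xi,u\rangle\le\sigma_C(u)=\sigma_C(u)+\sigma_N(u)$ since $\sigma_N(u)=0$ here.

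The remaining case $u\in\operatorname{bd}(T)$ is the genuinely delicate one, and it is where $Ax$ is in general unbounded while $\sigma_N(u)=0$. Mimicking Case~3 of Theorem~\ref{REP2}, I fix $v_0\in\operatorname{int}(T)$ and set $v_n:=\tfrac1n v_0+\tfrac{n-1}{n}u\in\operatorname{int}(T)$, so $v_n\to u$ and, by convexity and lower semicontinuity of $\sigma_{Ax}$, $\sigma_{Ax}(v_n)\to\sigma_{Ax}(u)$. The interior case just treated supplies $\xi_n\in W_{v_n}\subset C$ with $\langle\xi_n,v_n\rangle=\sigma_{Ax}(v_n)$, and the aim is to pass to a weak limit producing $\xi\in C$ (which is weakly closed) with $\langle\xi,u\rangle\ge\sigma_{Ax}(u)$. \emph{The main obstacle is precisely that $\{\xi_n\}$ need not be bounded}, its possible recession directions lying in $N$; this is exactly the point at which I invoke \cite[Proposition~5.1]{BY13} and follow the technique of \cite[Theorem~5.2]{BY13}, which furnishes the decomposition of $Ax$ whose recession cone is $N$ and thereby lets me split each $\xi_n$ into a bounded part, whose weak cluster point lands in $C$ and realizes $\sigma_{Ax}(u)$ against $u$, and a normal part absorbed into $N$ (and contributing nonpositively against $u\in T$). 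This gives $\sigma_{Ax}(u)\le\sigma_C(u)+\sigma_N(u)$ and completes \eqref{m.4}.

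Finally, \eqref{m.5} follows formally. Writing $W:=w-\underset{y\overset{D}{\to}x}{\operatorname{Limsup}}\,\tilde Ay$, any $\xi\in W_v$ is a weak limit of $\tilde A(x+t_nw_n)$ with $x+t_nw_n\in D$ and $x+t_nw_n\to x$, whence $\bigcup_{v}W_v\subset W$ and so $C\subset\overline{\operatorname{co}}(W)$; conversely demiclosedness again gives $W\subset Ax$, hence $\overline{\operatorname{co}}(W)\subset Ax$. Combining these with \eqref{m.4} yields $Ax=C+N\subset\overline{\operatorname{co}}(W)+N\subset Ax+N=Ax$, which forces $\overline{\operatorname{co}}(W)+N=Ax$, i.e.\ \eqref{m.5}. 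The entire argument thus rests on the boundary step of the previous paragraph, the off-cone and interior directions being immediate consequences of Theorem~\ref{REP2} and Lemma~\ref{int}.
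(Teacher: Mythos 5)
Your skeleton matches the paper's up to the decisive step: the easy inclusion $C+N(x;\overline{D(A)})\subset Ax$ via demiclosedness and $Ax+N(x;\overline{D(A)})=Ax$, the reduction to $x\in D(A)$, the identity $\sigma_{Ax}(u)=\langle\xi,u\rangle\le\sigma_C(u)$ for $u\in\operatorname{int}\bigl(T(x;\overline{D(A)})\bigr)$ via Theorem~\ref{REP2} and Lemma~\ref{int}, and the sandwich deducing \eqref{m.5} from \eqref{m.4} are all as in the paper. The first genuine gap is your bare assertion that $C+N(x;\overline{D(A)})$ is closed. The sum of a closed convex set and a closed convex cone need not be closed, and a support-function comparison by itself only yields $Ax\subset\overline{C+N(x;\overline{D(A)})}$. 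The paper spends the entire second half of its proof on precisely this point: given $s_n^*+\nu_n^*\to x^*$ with $s_n^*\in K\subset Ax$ and $\nu_n^*\in N(x;\overline{D(A)})$, it excludes $\|\nu_{n_k}^*\|\to\infty$ by testing the normalized sequences against $x-x_0$ for an interior point $x_0\in\operatorname{int}(D(A))$ (the normal part forces $\langle\xi^*,x-x_0\rangle\ge r$ while monotonicity of $A$ forces $\langle-\xi^*,x-x_0\rangle\ge 0$), and then concludes by reflexivity and weak closedness. Some such argument cannot be omitted.

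The second gap is the boundary case $u\in\operatorname{bd}\bigl(T(x;\overline{D(A)})\bigr)$, which you yourself flag as the delicate one but do not actually prove. Your proposed remedy --- invoking \cite[Proposition~5.1]{BY13} to ``split each $\xi_n$ into a bounded part and a normal part'' --- is not what that proposition does. In the paper it is applied once, as a black box, to pass directly from the inequality $\sigma_{Ax}(v)\le\sigma_K(v)$ for $v\in\operatorname{int}\bigl(T(x;\overline{D(A)})\bigr)$ to $Ax\subset\overline{K+N(x;\overline{D(A)})}$; boundary directions are never examined separately. If you prefer to keep a directionwise comparison, the boundary case in fact requires no weak limits of the $\xi_n$ at all: with $v_n=\tfrac1n v_0+\tfrac{n-1}{n}u$ one has $\sigma_C(v_n)=\sigma_{Ax}(v_n)$ because $\xi_n\in C\subset Ax$ attains $\sigma_{Ax}(v_n)$, and then convexity of $\sigma_C$ with $\sigma_C(v_0)\le\sigma_{Ax}(v_0)<+\infty$ gives
\begin{equation*}
\sigma_{Ax}(u)=\lim_{n\to\infty}\sigma_{Ax}(v_n)=\lim_{n\to\infty}\sigma_C(v_n)\le\sigma_C(u).
\end{equation*}
Either repair is acceptable, but as written the step is a gesture toward an argument rather than an argument, and together with the unproved closedness of $C+N(x;\overline{D(A)})$ it leaves the hard inclusion of \eqref{m.4} unestablished.
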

\begin{proof}
	By the maximal monotonicity of $A$, we have 
	\begin{equation*}
	\begin{split}
	&\overline{\operatorname{co}}\left\{\bigcup_{v \in \operatorname{int}(D(A))- x}w-\underset{w \to v, t \downarrow 0\atop x+tw \in D}{\operatorname{Limsup}}\,\tilde{A}(x+tw)\right\} +\mathrm{N}(x; \overline{D(A)})\\
	\subset \,\,\,&   \overline{\operatorname{co}}\left\{w-\underset{y  \overset{D}{\longrightarrow}  x}{\operatorname{Limsup}}\,\tilde{A}y\right\} +N(x; \overline{D(A)})\\
	\subset \,\,\,&  Ax +N(x; \overline{D(A)})= Ax.
	\end{split}
	\end{equation*}
	Hence, we only need to show that
	\begin{equation}\label{.*}
	Ax \subset \overline{\operatorname{co}}\left\{\bigcup_{v \in \operatorname{int}(D(A))- x}w-\underset{w \to v, t \downarrow 0\atop x+tw \in D}{\operatorname{Limsup}}\,\tilde{A}(x+tw)\right\} +\mathrm{N}(x; \overline{D(A)}).
	\end{equation}
	We set 
	$$ 
	K:=\overline{\operatorname{co}}\left\{\bigcup_{v \in \operatorname{int}(D(A))- x}w-\underset{w \to v, t \downarrow 0\atop x+tw \in D}{\operatorname{Limsup}}\,\tilde{A}(x+tw)\right\}.
	$$
	It is clear that $K \subset Ax.$ When $x \notin D(A)$, both side of \eqref{.*} are empty sets, hence we can assume that $x \in D(A).$ We will use Theorem~\ref{REP2} to show that
	\begin{equation}\label{sigmaEq}
	\sigma_{Ax}(v) \le \sigma_{K}(v),\quad \forall v \in \operatorname{int}(\mathrm{T}_{\overline{D(A)}})(x).
	\end{equation}
	Let $v\in  \operatorname{int}(\mathrm{T}_{\overline{D(A)}})(x)\setminus\{0\}$. By Theorem~\ref{REP2}, there exist $\xi\in w-\underset{\substack{w\rightarrow v, t\downarrow 0\\ x+tw\in D}}{\operatorname{Limsup}}\,\tilde{A}(x+tw)$ such that $\sigma_{Ax}(v)=\langle\xi,v\rangle$.
	On the other hand, by the formula for the interior of tangent cone, there exist $h>0$ such that
	$hv\in \operatorname{int}(D(A)-x)$. This implies that $\xi \in K$ since 
	$$
	w-\underset{\substack{w\rightarrow v, t\downarrow 0\\ x+tw\in D}}{\operatorname{Limsup}}\,\tilde{A}(x+tw)=w-\underset{\substack{w\rightarrow hv, t\downarrow 0\\ x+tw\in D}}{\operatorname{Limsup}}\,\tilde{A}(x+tw).
	$$
	Hence, \eqref{sigmaEq} is satisfied. According to \cite[Proposition 5.1]{BY13}, we get
	$$ Ax \subset \overline{K+\mathrm{N}(x; \overline{D(A)})}.$$
	Now we show that 
	$$ \overline{K+\mathrm{N}(x; \overline{D(A)})} = K+\mathrm{N}(x; \overline{D(A)}).$$
	Let $x^* \in \overline{K+N(x; \overline{D(A)})}.$ There exist $\{s^*_n\} \subset K, \{\nu^*_n\} \subset N(x; \overline{D(A)})$ such that
	$s^*_n + \nu^*_n \to x^*$.
	We will show that both $\{s^*_n\}$ and  $\{\nu^*_n\}$ are bounded. Suppose on the contrary that $\{\nu^*_n\}$ has a subsequence $\{\nu^*_{n_k}\}$ such that  $\|\nu^*_{n_k}\|\rightarrow\infty$. Without loss of generality, we assume that 
	$$
	\frac{\nu^*_{n_k}}{\Vert \nu^*_{n_k}\Vert} \rightharpoonup \xi^* \;\text{and}\; 	\frac{s^*_{n_k}}{\Vert \nu^*_{n_k}\Vert} \rightharpoonup -\xi^*.
	$$
	Let $x_0 \in \operatorname{int}(D(A))$ and $r>0$ be such that $x_0+r\mathbb{B} \subset \operatorname{int}(D(A))$. Then,
	$$ 
	\left\langle\frac{\nu^*_{n_k}}{\|\nu^*_{n_k}\|}, x-x_0\right\rangle \ge r, \quad \forall k\in\mathbb{N}.
	$$
	Taking the limit both sides of the latter inequality, we get $\langle \xi^*, x-x_0 \rangle \ge r$. On the other hand, for some $x^*_0 \in Ax_0$, since $s^*_{n_k} \in K \subset Ax$, we have by the monotonicity of $A$ that
	$$ 
	\left\langle \frac{s^*_{n_k}}{\|\nu^*_{n_k}\|}, x-x_0\right\rangle\ge\frac{\langle x^*_0, x-x_0 \rangle}{\|\nu^*_{n_k}\|},\quad  \forall k\in\mathbb{N}.
	$$
	Again, taking the limit both sides of the latter inequality we also get
	$\langle -\xi^*, x-x_0 \rangle \ge 0$ which is a contradiction.
	Therefore, both $\{\nu^*_n\}$ and $\{s^*_n\}$ are bounded. Since $X$ is reflexive, there exist subsequences $\{\nu^*_{n_k}\}, \{s^*_{n_k}\}$ of $\{\nu^*_n\}$ and $\{s^*_n\}$ respectively such that
	$\nu^*_{n_k} \rightharpoonup \nu^*\in K$ and $s^*_{n_k} \rightharpoonup s^*\in N(x; \overline{D(A)})$. 
	Hence, we have that
	$$ 
	x^* = \nu^* + s^* \in K+ N(x; \overline{D(A)}).
	$$
	$\hfill\Box$
\end{proof}
\begin{remark}
	The formula \eqref{m.5}  has a similar form to the representation formula in \cite[Theorem~5.2]{BY13}. The first term on the right hand-side of the representation in \cite[Theorem~5.2]{BY13}  is the closure convex hull of the limit values of the given maximal monotone operator on a dense subset of its domain while the first term on the right hand-side  of \eqref{m.5} is only represented by the  closure convex hull of the limit values of any selection of that maximal monotone operator. One of the usefulness of this representation is to allow us to prove the unique determination of maximal monotone operators on dense subsets of their domains and characterize the Lipschitz continuity of a convex function. 
\end{remark}

\begin{corollary} Let $X$ be a reflexive real Banach space and $A, B:X\rightrightarrows X^*$ be two maximal monotone operator such that $\operatorname{int}(D(A))=\operatorname{int}(D(B))\ne\emptyset$. If there exists a dense subset $D$ of $D(A)$ such that 
	\begin{equation}\label{Intersection} 
	Ax \cap Bx \ne \emptyset\,\quad \forall x \in D,
	\end{equation}
	then $A=B$.
\end{corollary}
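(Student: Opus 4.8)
The plan is to deduce $A=B$ directly from the global decomposition \eqref{m.5} by producing a single selection that serves both operators simultaneously. The first step is to record that the hypotheses force $\overline{D(A)}=\overline{D(B)}$. Since $\operatorname{int}(D(A))\ne\emptyset$ and $\overline{D(A)}$ is convex (near convexity), the standard convex-set identity gives $\overline{D(A)}=\overline{\operatorname{int}(D(A))}$, and likewise $\overline{D(B)}=\overline{\operatorname{int}(D(B))}$; as $\operatorname{int}(D(A))=\operatorname{int}(D(B))$ by assumption, the two closures coincide. Consequently $N(x;\overline{D(A)})=N(x;\overline{D(B)})$ for every $x\in X$, so the normal-cone terms in the two decompositions automatically agree.

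Next I would observe that the intersection hypothesis \eqref{Intersection} yields $D\subset D(A)\cap D(B)$: if $Ax\cap Bx\ne\emptyset$, then in particular $Bx\ne\emptyset$, so $x\in D(B)$. Combined with $D\subset D(A)$ and the density of $D$ in $D(A)$, this gives $\overline{D}=\overline{D(A)}=\overline{D(B)}$, whence $D$ is a dense subset of $D(B)$ as well. Thus the decomposition \eqref{m.5} is legitimately available for both $A$ and $B$ relative to the \emph{same} dense set $D$.

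The key point is that the first term on the right-hand side of \eqref{m.5} depends on the chosen selection only through its values on $D$, because the outer limit $y\overset{D}{\to}x$ ranges over points of $D$. I would therefore choose, for each $x\in D$, a point $\tilde{C}(x)\in Ax\cap Bx$ (possible by \eqref{Intersection}), and extend $\tilde{C}$ in any way to selections $\tilde{A}$ of $A$ and $\tilde{B}$ of $B$ on the remaining points of $D(A)$ and $D(B)$; on $D$ these selections coincide with $\tilde{C}$. Applying \eqref{m.5} to $A$ with selection $\tilde{A}$ and to $B$ with selection $\tilde{B}$, the two closed convex-hull terms are identical, since each only sees the common values $\tilde{C}$ along sequences in $D$ converging to $x$, while the normal-cone terms are identical by the first step. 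Hence $Ax=Bx$ for every $x\in X$.

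I expect the main obstacle to be the domain bookkeeping: verifying that $D$ is genuinely dense in $D(B)$, not merely in $D(A)$, so that \eqref{m.5} truly applies to $B$, and confirming that extending the common selection $\tilde{C}$ to full selections of $A$ and $B$ leaves the limsup terms undisturbed. Both points reduce to the observation that \eqref{m.5} probes a selection only along sequences in $D$ tending to $x$, together with the identity $\overline{D(A)}=\overline{D(B)}$ established at the outset.
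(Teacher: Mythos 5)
Your proposal is correct and follows essentially the same route as the paper: establish $\overline{D(A)}=\overline{D(B)}$ from the equality of interiors, pick a common selection on $D$ using the intersection hypothesis, and apply the global decomposition \eqref{m.5} to both operators. Your extra care in verifying that $D$ is also dense in $D(B)$ is a detail the paper's proof leaves implicit, but it does not change the argument.
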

\begin{proof}  Since $\operatorname{int}(D(A))=\operatorname{int}(D(B))$, we have 
	$$ 
	\overline{D(A)}= \overline{\operatorname{int}(D(A))}= \overline{\operatorname{int}(D(B))}=\overline{D(B)}.
	$$
	From \eqref{Intersection}, we can find selections $\tilde{A}$ of $A$ and $\tilde{B}$ of $B$ such that $\tilde{A}=\tilde{B}$ on $D$.   
	Applying Corollary~\ref{REP2}, for every $x\in X$, we have
	\begin{eqnarray*}
		Ax&=& \overline{\operatorname{co}}\left\{w-\underset{y \overset{D}{\to} x}{\operatorname{Limsup}}\,\tilde{A}y\right\} +\mathrm{N}(x; \overline{D(A)})\\
		&=& \overline{\operatorname{co}}\left\{w-\underset{y \overset{D}{\to} x}{\operatorname{Limsup}}\,\tilde{B}y\right\} +\mathrm{N}(x; \overline{D(B)})\\
		&=&Bx.
	\end{eqnarray*}
	$\hfill\Box$
\end{proof}
	We end this section by the following example.
	\begin{example} Let $X$ be a reflexive real Banach space and $f:X\rightarrow\mathbb{R}$ a lower semicontinuous convex function. Suppose that there exist a dense subset $D$ of $X$ and $\ell\geq 0$ such that
		$$
		\partial f(x)\cap\ell\overline{\mathbb{B^*}}\ne\emptyset, \quad \forall x\in D.
		$$
Then, $f$ is $\ell-$Lipschitz continuous on $X$, i.e.,
$$
|f(x)-f(y)|\leq \ell\|x-y\|, \quad \forall x,y\in X.
$$
Indeed, under our assumptions, it follows from \eqref{m.5} that 
$$
\partial f(x)\subset \ell \overline{\mathbb{B}^*}, \quad \forall x\in X,
$$
which implies that $f$ is $\ell-$Lipschitz continuous on $X$. 
		
	\end{example}
\section{Conclusions}
We have provided representation formulas for faces and support functions for the values of maximal monotone operators in reflexive Banach spaces. 
The obtained representation formulas help us to prove the local unique determination of a maximal monotone operator from its minimal-norm selection or on a dense subset of its domain. Some local and global decompositions for maximal monotone operators are also established. Further developments will be devoted to extending our results to arbitrary Banach spaces.  

\end{document}